\newtheorem{theorem}{Theorem}
\newtheorem{definition}{Definition}
\newtheorem{corollary}{Corollary}
\newtheorem{proposition}{Proposition}
\newtheorem{lemma}{Lemma}
\newtheorem{notation}{Notation}
\newtheorem{example}{Example}
\newtheorem{remark}{Remark}    
\renewcommand{\headrulewidth}{0,5pt}
\newcommand{\vertiii}[1]{{\left\vert\kern-0.25ex\left\vert\kern-0.25ex\left\vert #1 
    \right\vert\kern-0.25ex\right\vert\kern-0.25ex\right\vert}} 
\title{On a class of  Schauder frames in Banach spaces}
\author{{\footnotesize Samir Kabbaj, Rafik Karkri and Zoubeir Hicham}  }
\date{\today}
\begin{document}
\maketitle 

\begin{abstract}
In this paper, we give a characterization and a some  properties of a besselian sequences, 
which allows us to build some examples of a besselian Schauder frames. Also
for   a  reflexive Banach spaces (with 
 a besselian Schauder frames) we give some characterizations.
\end{abstract}
{\footnotesize $\textbf{MSC2020-Mathematics Subject Classification System}$. 
46B04, 46B10, 46B15, 46B25, 46B45.}\\
{\footnotesize $\textbf{Keywords and phrases:}
$ Schauder frame, Frame, Weakly sequentially complete Banach space, Besselian sequence, 
 Besselian  Schauder frame.}

\renewcommand{\headrulewidth}{0pt}  
\fancyhead[C]{{\scriptsize Samir Kabbaj, Rafik Karkri  and Hicham Zoubeir }} 

\maketitle

\section{Introduction}

In $1946,$ Gabor \cite{gab} formulated a fundamental approach 
to signal decomposition in terms of
elementary signals. In $1952,$ Duffin and Schaeffer  
\cite{duf} abstracted the fundamental notion of Gabor for
  introduced  the notion of frames for Hilbert spaces as a generalization of basis. 
 A frames for a Banach spaces was 
 introduced in $(1991)$ by Gr\"{o}chenig \cite{gro}
 to extend the definition of frames to that of general separable Banach spaces. 
 In light of the works of  Cassaza, Han and Larson  \cite
{cas} $(1999)$,    Han and Larson \cite{lar2} $(2000)$.
 Cassaza \cite{cas.2008} $(2008)$  introduced the notion of
Schauder frame of a given  Banach space. In 2021 Karkri and Zoubeir
  \cite{karkri.zoubeir.2021} introduced the notion of
besselian Schauder frames of Banach spaces and obtained some results  on
universal  spaces and complemented subspaces for Banach spaces with Schauder frames 
or with besselian Schauder frames.
 For more generalizations  of  frame notion one can refer to 
 \cite{cas2, Naroei.Nazari, Kim.Lim}

In this paper, we give a characterization and a some  properties of a besselian sequences, 
which allows us to build some examples of a besselian Schauder frames. In the other hand,
 for   a  reflexive Banach spaces (with 
 a besselian Schauder frames) we give some characterizations.
\section{Main definitions, principal notations and useful remarks}
Let $\left( X,\left \Vert .\right \Vert _{X}\right) $ be a Banach space on $%
\mathbb{K}\in \left \{ \mathbb{R},\mathbb{C}\right \} $ and $X^{\ast },$ $%
X^{\ast \ast }$and $X^{\ast \ast \ast }$ respectively its first, second and
third topological duals.

\begin{enumerate}
\item We denote by $\mathbb{B}_{X}$ the closed unit ball of $X:$%
\begin{equation*}
\mathbb{B}_{X}:=\left \{ x\in X:\left \Vert x\right \Vert _{X}\leq 1\right \}
\end{equation*}

\item We denote by $l^{1}(\mathbb{K)}$ the $\mathbb{K}$-vector space of
sequences $\lambda :=\left( \lambda _{n}\right) _{n\in 
\mathbb{N}
^{\ast }}$ such that $\lambda _{n}\in \mathbb{K}$ for each $n\in 
\mathbb{N}
^{\ast }$ and $\sum_{n=1}^{+\infty }\left \vert \lambda _{n}\right \vert
<+\infty .\ $It is a classical result that $l^{1}(\mathbb{K)}$ is a Banach
space for the norm : 
\begin{equation*}
\begin{array}{cccc}
\left \Vert .\right \Vert _{l^{1}(\mathbb{K)}}: & l^{1}(\mathbb{K)} & 
\rightarrow & 
\mathbb{R}
^{+} \\ 
& \left( \lambda _{n}\right) _{n\in 
\mathbb{N}
^{\ast }} & \mapsto & \underset{n=1}{\overset{+\infty }{\sum }}\left \vert
\lambda _{n}\right \vert%
\end{array}%
\end{equation*}

\item We denote, for each $n\in 
\mathbb{N}
^{\ast },$ by $e_{n}$ the element of $l^{1}\left( \mathbb{K}\right) $  defined by the
relation $e_{n}:=(\delta _{k,n})_{k\in 
\mathbb{N}
^{\ast }}$, where $\delta _{k,n}=1$ if $k=n$ and $\delta _{k,n}=0$ if $k\neq
n.$

\item We denote, for each $n\in 
\mathbb{N}
^{\ast },$ by $u_{n}^{\ast }$ the element of $\left( l^{1}\left( \mathbb{K}%
\right) \right) ^{\ast }$ defined, for each $\lambda :=\left( \lambda
_{k}\right) _{k\in 
\mathbb{N}
^{\ast }}\in l^{1}\left( \mathbb{K}\right)$ by the relation $u_{n}^{\ast }\left( \lambda \right) :=\lambda
_{n}.$

\item We denote by $l^{\infty }(\mathbb{K)}$ the $\mathbb{K}$-vector space
of sequences $\lambda :=\left( \lambda _{n}\right) _{n\in 
\mathbb{N}
^{\ast }}$ such that $\lambda _{n}\in \mathbb{K}$ for each $n\in 
\mathbb{N}
^{\ast }$ and $\sup_{n\in 
\mathbb{N}
^{\ast }}\left( \left \vert \lambda _{n}\right \vert \right) <+\infty .\ $It
is a classical result that $l^{\infty }(\mathbb{K)}$ is a Banach space for
the norm :%
\begin{equation*}
\begin{array}{cccc}
\left \Vert .\right \Vert _{l^{\infty }\left( \mathbb{K}\right) }: & 
l^{\infty }\left( \mathbb{K}\right) & \rightarrow & 
\mathbb{R}
^{+} \\ 
& \left( \mu _{n}\right) _{n\in 
\mathbb{N}
^{\ast }} & \mapsto & \underset{n\in 
\mathbb{N}
^{\ast }}{\sup }\left( \left \vert \mu _{n}\right \vert \right)%
\end{array}%
\end{equation*}

\item We denote by $\Psi $ the mapping :%
\begin{equation*}
\begin{array}{cccc}
\Psi : & l^{\infty }\left( \mathbb{K}\right) & \rightarrow & \left(
l^{1}\left( \mathbb{K}\right) \right) ^{\ast } \\ 
& \mu :=\left( \mu _{n}\right) _{n\in 
\mathbb{N}
^{\ast }} & \mapsto & \Psi \left( \mu \right)%
\end{array}%
\end{equation*}%
where :%
\begin{equation*}
\Psi \left( \mu \right) \left( \left( \lambda _{n}\right) _{n\in 
\mathbb{N}
^{\ast }}\right) :=\underset{n=1}{\overset{+\infty }{\sum }}\mu _{n}\lambda
_{n}
\end{equation*}%
It is well-known that $\Psi $ is an isometric isomorphism from $l^{\infty
}\left( \mathbb{K}\right) $ onto $l^{1}\left( \mathbb{K}\right) $ \cite[page 85, example 1.10.3]{meg}
.
\item We denot by $J_{X}$ the canonical linear mapping : 
\begin{equation*}
\begin{array}{cccc}
J_{X}: & X & \rightarrow & X^{\ast \ast } \\ 
& x & \mapsto & J_{X}(x)%
\end{array}%
\end{equation*}%
defined for each $x\in X$ and $x^{\ast }\in X^{\ast }$ by the formula $%
J_{X}(x)(x^{\ast })=x^{\ast }\left( x\right) $. It is well-known
 \cite[page 98, proposition 1.11.3]{meg} that the linear mapping $J_{X}$ is an
isometry from $X$ into $X^{\ast \ast }.$
\item Let $\left( x_{n}\right) _{n\in \mathbb{N}^{\ast }}$ be a sequence of
elements of $X.$ The series $\sum x_{n}$ of $X$ is said to be weakly
unconditionally convergent \cite[pages 58-59]{woj} if the series $\sum
\left \vert x^{\ast }\left( x_{n}\right) \right \vert $ is convergent for
each $x^{\ast }\in X^{\ast }$.

\item The Banach space $X$ is said to be weakly sequentially complete \cite
[page 218, definition 2.5.23]{meg} \cite[pages 37-38]{kal}  if for each
sequence $(x_{n})_{n\in \mathbb{N}^{\ast }}$ of $X$ such that $\underset{%
n\rightarrow +\infty }{\lim }x^{\ast }(x_{n})$ exists for every $x^{\ast
}\in X^{\ast },$ there exists $x\in X$ such that $\underset{n\rightarrow
+\infty }{\lim }x^{\ast }(x_{n})=x^{\ast }(x)$ for every $x^{\ast }\in
X^{\ast }$.
\item A sequence $\mathfrak{X}:=\left( \left( x_{n},y_{n}^{\ast }\right)
\right) _{n\in \mathbb{N}^{\ast }}\subset X\times X^{*}$ is called a paire of $X$.

\item The sequence $\mathfrak{X}^{\ast }:=\left( \left( y_{n}^{\ast
},J_{X}\left( x_{n}\right) \right) \right) _{n\in \mathbb{N}^{\ast }}\subset X^{*}\times X^{**}$ is
called the dual paire of the paire $\mathfrak{X}.$

\item The paire $\mathfrak{X}$ is called a Schauder frame (resp.
unconditional Schauder frame ) of $X$ if for all $x\in X$, the series $\sum
y_{n}^{\ast }\left( x\right) x_{n}$ is convergent (resp. unconditionally
convergent) in $X$ to $x$.

\item The paire $\mathfrak{X}$ is said to be a besselian paire of $X$ if
there exists a constant $A>0$ such that : 
\begin{equation*}
\underset{n=1}{\overset{+\infty }{\sum }}\left \vert y_{n}^{\ast }\left(
x\right) \right \vert \left \vert x^{\ast }\left( x_{n}\right) \right \vert
\leq A\left \Vert x\right \Vert _{X}\left \Vert x^{\ast }\right \Vert
_{X^{\ast }}
\end{equation*}%
for each $x\in X$ and $x^{\ast }\in X^{\ast }.$

\item The paire $\mathfrak{X}$ is said to be a besselian Schauder frame of $%
X $ if it is both a besselian paire and a Schauder frame of $X$ .

\item A Schauder frame $\mathfrak{X}$ of $X$ is said to be shrinking if the
series $\sum $ $x^{\ast }\left( x_{n}\right) y_{n}^{\ast }$ is convergent
for every $x^{\ast }\in X^{\ast }.$

\item A Schauder frame $\mathfrak{X}$ of $X$ is said to be boundedly
complete if the series $\sum x^{\ast \ast }\left( y_{n}^{\ast }\right)
x_{n} $ is convergent in $X$ for every $x^{\ast \ast }\in X^{\ast \ast }.$
\end{enumerate}
\begin{remark}
\label{bsf.exmample.l1}
For each $\lambda =\left( \lambda _{n}\right) _{n\in \mathbb{N}^{\ast }}\in
l^{1}\left( \mathbb{K}\right) ,$ it is clear that the series $\sum
u_{n}^{\ast }\left( \lambda \right) e_{n}$ is convergent and that we have $:$
\begin{equation*}
\lambda =\underset{n=1}{\overset{+\infty }{\sum }}u_{n}^{\ast }\left(
\lambda \right) e_{n}
\end{equation*}%
On the other hand let us given $\lambda =\left( \lambda _{n}\right) _{n\in 
\mathbb{N}^{\ast }}\in l^{1}\left( \mathbb{K}\right) $ and $\xi ^{\ast }\in
\left( l^{1}\left( \mathbb{K}\right) \right) ^{\ast }.$ We set $\mu :=\Psi
^{-1}\left( \xi ^{\ast }\right) \in l^{\infty }\left( \mathbb{K}\right) $.
Hence $\xi ^{\ast }=\Psi \left( \mu \right) $ and we have $:$ 
\begin{eqnarray*}
\underset{n=1}{\overset{+\infty }{\sum }}\left \vert u_{n}^{\ast }\left(
\lambda \right) \right \vert \left \vert \xi ^{\ast }\left( e_{n}\right)
\right \vert &=&\underset{n=1}{\overset{+\infty }{\sum }}\left \vert \lambda
_{n}\right \vert \left \vert \mu _{n}\right \vert \\
&\leq &\left( \underset{n=1}{\overset{+\infty }{\sum }}\left \vert \lambda
_{n}\right \vert \right) \underset{n\in \mathbb{N}^{\ast }}{\sup }\left
\vert \mu _{n}\right \vert \\
&\leq &\left \Vert \lambda \right \Vert _{l^{1}\left( \mathbb{K}\right)
}\left \Vert \mu \right \Vert _{l^{\infty }\left( \mathbb{K}\right) } \\
&\leq &\left \Vert \lambda \right \Vert _{l^{1}\left( \mathbb{K}\right)
}\left \Vert \xi ^{\ast }\right \Vert _{\left( l^{1}\left( \mathbb{K}\right)
\right) ^{\ast }}
\end{eqnarray*}%
It follows that $\left( \left( e_{n},u_{n}^{*}\right) \right) _{n\in 
\mathbb{N}^{*}}$ is a besselian Schauder frame of $l^{1}\left( \mathbb{K}\right) 
$.
\end{remark}
\begin{remark}
Assume that the paire $\mathfrak{X}$ and its dual $\mathfrak{X}^{\ast }$ are
a Schauder frames of $X$\ and $X^{\ast }$\ respectively. Then $\mathfrak{X}$
will be a shrinking Schauder frame of $X.$
\end{remark}
\begin{proof}
Since $\mathfrak{X}^{\ast }$ is a Schauder frame of $X^{\ast }$ it follows
that the series $\sum J_{X}\left( x_{n}\right) \left( x^{\ast }\right)
y_{n}^{\ast }$ $=\sum x^{\ast }\left( x_{n}\right) y_{n}^{\ast }$ is
convergent for each $x^{\ast }\in X^{\ast }.$ Consequently $\mathfrak{X}$ is
a shrinking Schauder frame of $X.$
\end{proof}
\begin{remark}
For a besselian paire $\mathfrak{X}$ of $X$, the quantity 
\begin{equation*}
\mathcal{L}_{\mathfrak{X}}:=\underset{\left( u,u^{\ast }\right) \in \mathbb{B%
}_{X}\times \mathbb{B}_{X^{\ast }}}{\sup }\left( \underset{n=1}{\overset{%
+\infty }{\sum }}\left \vert y_{n}^{\ast }\left( u\right) \right \vert \left
\vert u^{\ast }\left( x_{n}\right) \right \vert \right)
\end{equation*}%
is finite and for each $ x\in X$ and $x^{\ast }\in X^{\ast }$%
, the following inequality holds%
\begin{equation*}
\underset{n=1}{\overset{+\infty }{\sum }}\left \vert y_{n}^{\ast }\left(
x\right) \right \vert \left \vert x^{\ast }\left( x_{n}\right) \right \vert
\leq \mathcal{L}_{\mathfrak{X}}\left \Vert x\right \Vert _{X}\left \Vert
x^{\ast }\right \Vert _{X^{\ast }}
\end{equation*}
\end{remark}
For all the material on Banach spaces, one can refer to \cite{meg}, \cite
{kal}, \cite{lin01}, \cite{lin02}, \cite{woj}. In the sequel $\left(
E,\left
\Vert \cdot \right \Vert _{E}\right) $ is a given separable Banach
space, $\left( \left( a_{n},b_{n}^{\ast }\right) \right) _{n\in \mathbb{N}%
^{\ast }}$a paire of $E$ and $p\in \left] 1,+\infty \right[ $ is a given
constant and we set $p^{\ast }=\dfrac{p}{p-1}$. Finally we will index all
the series by $\mathbb{N}^{*}$.
In the sequel $\mathcal{F}:=\left( \left( a_{n},b_{n}^{\ast }\right) \right)
_{n\in \mathbb{N}^{\ast }}$ is a fixed paire of a Banach space $E.$
\section{Fundamental results}
\begin{definition}\cite{mart}.
Given a Banach space $\left( Y,\left \Vert .\right \Vert _{Y}\right) $
and a real number $0<\alpha <1$. A linear mapping $\varphi :X\rightarrow Y$ is
said to be an $\alpha $-isometry  if the following condition
holds for each $x\in X:$%
\begin{equation*}
\left( 1-\alpha \right) \left \Vert x\right \Vert _{X}\leq \left \Vert
\varphi \left( x\right) \right \Vert _{Y}\leq \left( 1+\alpha \right) \left
\Vert x\right \Vert _{X}
\end{equation*}
\end{definition}
\begin{theorem}
\label{bess-ssi-dualbess}
The paire $\mathcal{F}$ \textit{of }$E$\textit{\ is a besselian
paire of }$E$\textit{\ if and only if its dual paire} $\mathcal{F}^{\ast }$ 
\textit{is a besselian paire of }$E^{\ast }.$\textit{\ In this case we have }%
$\mathcal{L}_{\mathcal{F}}=\mathcal{L}_{\mathcal{F}^{\ast }}.$
\end{theorem}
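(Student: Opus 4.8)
The plan is to prove the two implications separately and to keep track of the constants, so that the equality $\mathcal{L}_{\mathcal{F}}=\mathcal{L}_{\mathcal{F}^{\ast }}$ drops out of the two inequalities $\mathcal{L}_{\mathcal{F}}\leq \mathcal{L}_{\mathcal{F}^{\ast }}$ and $\mathcal{L}_{\mathcal{F}^{\ast }}\leq \mathcal{L}_{\mathcal{F}}$. Before starting I would unwind the definition of the dual paire: since $J_{E}(a_{n})(u)=u(a_{n})$ for every $u\in E^{\ast }$, the besselian inequality for $\mathcal{F}^{\ast }=\left( \left( b_{n}^{\ast },J_{E}(a_{n})\right) \right) _{n}$ reads $\sum_{n}\left\vert u(a_{n})\right\vert \left\vert u^{\ast }(b_{n}^{\ast })\right\vert \leq B\left\Vert u\right\Vert _{E^{\ast }}\left\Vert u^{\ast }\right\Vert _{E^{\ast \ast }}$ for all $u\in E^{\ast }$ and $u^{\ast }\in E^{\ast \ast }$, and $\mathcal{L}_{\mathcal{F}^{\ast }}$ is the supremum of the left-hand side over $\mathbb{B}_{E^{\ast }}\times \mathbb{B}_{E^{\ast \ast }}$.

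The implication ``$\mathcal{F}^{\ast }$ besselian $\Rightarrow \mathcal{F}$ besselian'' is the easy one: given $x\in E$ and $x^{\ast }\in E^{\ast }$, I would apply the inequality for $\mathcal{F}^{\ast }$ to the pair $u:=x^{\ast }$ and $u^{\ast }:=J_{E}(x)$. Then $u(a_{n})=x^{\ast }(a_{n})$ and $u^{\ast }(b_{n}^{\ast })=J_{E}(x)(b_{n}^{\ast })=b_{n}^{\ast }(x)$, so the left-hand side becomes exactly $\sum_{n}\left\vert b_{n}^{\ast }(x)\right\vert \left\vert x^{\ast }(a_{n})\right\vert $; since $J_{E}$ is an isometry, $\left\Vert u^{\ast }\right\Vert _{E^{\ast \ast }}=\left\Vert x\right\Vert _{E}$, and the besselian inequality for $\mathcal{F}$ follows with the same constant. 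Taking suprema yields $\mathcal{L}_{\mathcal{F}}\leq \mathcal{L}_{\mathcal{F}^{\ast }}$.

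The reverse implication is the delicate one, because a general $u^{\ast }\in E^{\ast \ast }$ need not lie in the range of $J_{E}$, so I cannot simply substitute as above. Here I would fix $u\in \mathbb{B}_{E^{\ast }}$ and $u^{\ast }\in \mathbb{B}_{E^{\ast \ast }}$, work with the partial sum $S_{N}:=\sum_{n=1}^{N}\left\vert u(a_{n})\right\vert \left\vert u^{\ast }(b_{n}^{\ast })\right\vert $, and choose unimodular scalars $\epsilon _{n}$ with $\epsilon _{n}u^{\ast }(b_{n}^{\ast })=\left\vert u^{\ast }(b_{n}^{\ast })\right\vert $, so that $S_{N}=u^{\ast }(z_{N}^{\ast })$ where $z_{N}^{\ast }:=\sum_{n=1}^{N}\left\vert u(a_{n})\right\vert \epsilon _{n}b_{n}^{\ast }\in E^{\ast }$. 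Then $S_{N}\leq \left\Vert u^{\ast }\right\Vert _{E^{\ast \ast }}\left\Vert z_{N}^{\ast }\right\Vert _{E^{\ast }}\leq \left\Vert z_{N}^{\ast }\right\Vert _{E^{\ast }}$, and evaluating $z_{N}^{\ast }$ at an arbitrary $x\in \mathbb{B}_{E}$ gives $\left\vert z_{N}^{\ast }(x)\right\vert \leq \sum_{n=1}^{N}\left\vert u(a_{n})\right\vert \left\vert b_{n}^{\ast }(x)\right\vert $, which by the besselian property of $\mathcal{F}$ applied with $x^{\ast }:=u$ is bounded by $\mathcal{L}_{\mathcal{F}}\left\Vert x\right\Vert _{E}\left\Vert u\right\Vert _{E^{\ast }}\leq \mathcal{L}_{\mathcal{F}}$. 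Hence $\left\Vert z_{N}^{\ast }\right\Vert _{E^{\ast }}\leq \mathcal{L}_{\mathcal{F}}$ and $S_{N}\leq \mathcal{L}_{\mathcal{F}}$ uniformly in $N$; letting $N\rightarrow +\infty $ and taking the supremum over the unit balls gives both that $\mathcal{F}^{\ast }$ is besselian and that $\mathcal{L}_{\mathcal{F}^{\ast }}\leq \mathcal{L}_{\mathcal{F}}$.

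Combining the two inequalities gives $\mathcal{L}_{\mathcal{F}}=\mathcal{L}_{\mathcal{F}^{\ast }}$. The main obstacle is the reverse direction: the passage from $E$ to $E^{\ast \ast }$ cannot be done through the canonical embedding, and the whole point is the reduction via the duality $\left\Vert z_{N}^{\ast }\right\Vert _{E^{\ast }}=\sup_{x\in \mathbb{B}_{E}}\left\vert z_{N}^{\ast }(x)\right\vert $ together with the sign normalization, which converts the $E^{\ast \ast }$-estimate into an $E^{\ast }$-estimate to which the hypothesis on $\mathcal{F}$ directly applies. I would also note that the finiteness of the constants $\mathcal{L}_{\mathcal{F}}$ and $\mathcal{L}_{\mathcal{F}^{\ast }}$ is guaranteed by the earlier remark on besselian paires, so all the suprema written above are legitimate.
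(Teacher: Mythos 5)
Your proof is correct, and the hard direction is handled by a genuinely different (and more elementary) route than the paper's. For the implication ``$\mathcal{F}$ besselian $\Rightarrow \mathcal{F}^{\ast}$ besselian'', the paper invokes the principle of local reflexivity: for fixed $x^{\ast\ast}$ it produces $\alpha$-isometries $T_{n}$ with $b_{j}^{\ast}(T_{n}(x^{\ast\ast}))=x^{\ast\ast}(b_{j}^{\ast})$ for $j\leq n$, applies the besselian inequality of $\mathcal{F}$ to the point $T_{n}(x^{\ast\ast})\in E$, and then lets $n\to+\infty$ and $\alpha\to 0$. You instead normalize signs, write the partial sum as $S_{N}=u^{\ast\ast}(z_{N}^{\ast})$ with $z_{N}^{\ast}=\sum_{n=1}^{N}\left\vert u(a_{n})\right\vert \epsilon_{n}b_{n}^{\ast}\in E^{\ast}$, and estimate $\Vert z_{N}^{\ast}\Vert_{E^{\ast}}$ by duality against $\mathbb{B}_{E}$, where the besselian hypothesis on $\mathcal{F}$ applies directly; this yields $S_{N}\leq\mathcal{L}_{\mathcal{F}}$ uniformly in $N$ with no auxiliary limit. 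Each step checks out (the $\epsilon_{n}$ exist in both the real and complex cases, the partial sums are increasing and bounded, and homogeneity recovers the inequality for arbitrary $u,u^{\ast\ast}$), and the easy direction via $u:=x^{\ast}$, $u^{\ast\ast}:=J_{E}(x)$ coincides with the paper's. Your argument buys elementarity and a clean, quantitative constant; the paper's buys nothing extra here beyond illustrating local reflexivity, and indeed the authors later sketch a second proof via their Theorem~2 and the canonical projection $E^{\ast\ast\ast}\to E^{\ast}$, which also avoids local reflexivity but does not track the constant $\mathcal{L}_{\mathcal{F}}$ the way yours does.
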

\begin{proof}
Assume that $\mathcal{F}$ is a besselian paire of $E$. Let $x^{\ast }\in
E^{\ast }$, $x^{\ast \ast }\in E^{\ast \ast }$. We set for each $n\in 
\mathbb{N}^{\ast }$: $U=span(x^{\ast \ast })$ and $V_{n}=span(b_{1}^{\ast
},...,b_{n}^{\ast })$. It is clear that $U$ and $V_{n}$ are finite
dimensional subspaces of $E^{\ast \ast }$ and $E^{\ast }$ respectively.%
\newline
According to the principle of local reflexivity \cite[Theorem.2]{mart}, we
can find for each $\alpha >0$ an $\alpha $-isometry $T_{n}:U\longrightarrow
E $ such that :%
\begin{equation*}
\left \{ 
\begin{array}{c}
b_{j}^{\ast }\left( T_{n}(x^{\ast \ast })\right) =x^{\ast \ast }(b_{j}^{\ast
}),\text{ }j\in \left \{ 1,...n\right \} \\ 
\left \Vert T_{n}(x^{\ast \ast })\right \Vert _{E}\leq (1+\alpha )\left
\Vert x^{\ast \ast }\right \Vert _{E^{\ast \ast }}%
\end{array}%
\right.
\end{equation*}

It follows that : 
\begin{align*}
\underset{j=1}{\overset{n}{\sum }}\left \vert x^{\ast \ast }(b_{j}^{\ast
})\right \vert \left \vert J_{E}(a_{j})\left( x^{\ast }\right) \right \vert
& =\underset{j=1}{\overset{n}{\sum }}\left \vert b_{j}^{\ast }(T_{n}(x^{\ast
\ast }))\right \vert \left \vert x^{\ast }(a_{j})\right \vert \\
& \leq \mathcal{L}_{\mathcal{F}}\left \Vert T_{n}(x^{\ast \ast })\right
\Vert _{E}\left \Vert x^{\ast }\right \Vert _{E^{\ast }} \\
& \leq (1+\alpha )\mathcal{L}_{\mathcal{F}}\left \Vert x^{\ast \ast }\right
\Vert _{E^{\ast \ast }}\left \Vert x^{\ast }\right \Vert _{E^{\ast }}
\end{align*}%
Consequently, we have 
\begin{equation*}
\underset{j=1}{\overset{+\infty }{\sum }}\left \vert x^{\ast \ast
}(b_{j}^{\ast })\right \vert \left \vert J_{E}(a_{j})\left( x^{\ast }\right)
\right \vert \leq (1+\alpha )\mathcal{L}_{\mathcal{F}}\left \Vert x^{\ast
\ast }\right \Vert _{E^{\ast \ast }}\left \Vert x^{\ast }\right \Vert
_{E^{\ast }}
\end{equation*}%
for each $\alpha >0$, $x^{\ast \ast }\in E^{\ast \ast }$ and $x^{\ast }\in
E^{\ast }$. Hence : 
\begin{equation*}
\underset{j=1}{\overset{+\infty }{\sum }}\left \vert x^{\ast \ast
}(b_{j}^{\ast })\right \vert \left \vert J_{E}(a_{j})\left( x^{\ast }\right)
\right \vert \leq \mathcal{L}_{\mathcal{F}}\left \Vert x^{\ast \ast }\right
\Vert _{E^{\ast \ast }}\left \Vert x^{\ast }\right \Vert _{E^{\ast }}
\end{equation*}%
for each $x^{\ast \ast }\in E^{\ast \ast }$ and $x^{\ast }\in E^{\ast }$.
Consequently $\mathcal{F}^{\ast }$ is a besselian paire of $E^{\ast }$and we
have $\mathcal{L}_{\mathcal{F}^{\ast }}\leq \mathcal{L}_{\mathcal{F}}.$

Assume now that $\mathcal{F}^{\ast }$ is a besselian paire of $E^{\ast }$.
Let $(x,x^{\ast })\in E\times E^{\ast }$, then we have : 
\begin{align*}
\underset{j=1}{\overset{+\infty }{\sum }}\left \vert b_{j}^{\ast }(x)\right
\vert \left \vert x^{\ast }(a_{j})\right \vert & =\overset{+\infty }{%
\underset{j=1}{\sum }}\left \vert J_{E}(x)(b_{j}^{\ast })\right \vert \left
\vert J_{E}\left( a_{j}\right) (x^{\ast })\right \vert \\
& \leq \mathcal{L}_{\mathcal{F}^{\ast }}\left \Vert J_{E}(x)\right \Vert
_{E^{\ast \ast }}\left \Vert x^{\ast }\right \Vert _{E^{\ast }} \\
& \leq \mathcal{L}_{\mathcal{F}^{\ast }}\left \Vert x\right \Vert _{E}\left
\Vert y^{\ast }\right \Vert _{E^{\ast }}
\end{align*}%
Hence $\mathcal{F}$ is a besselian paire of $E$\newline
and we have $\mathcal{L}_{\mathcal{F}}\leq \mathcal{L}_{\mathcal{F}^{\ast
}}. $

We conclude that :

(i) $\mathcal{F}$ is a besselian paire of $E$ if and only $\mathcal{F}^{\ast
}$ is a besselian paire of $E^{\ast }$.

(ii) If $\mathcal{F}$ is a besselian paire of $E$ then we will have $%
\mathcal{L}_{\mathcal{F}^{\ast }}\leq \mathcal{L}_{\mathcal{F}\text{ }}$and $%
\mathcal{L}_{\mathcal{F}}\leq \mathcal{L}_{\mathcal{F}^{\ast }},$ hence $%
\mathcal{L}_{\mathcal{F}}=\mathcal{L}_{\mathcal{F}^{\ast }}.$

The proof of the theorem is then complete.\ 
\end{proof}
\begin{proposition}
\label{bess.imp.ucv}
\begin{enumerate}
\item \textit{We assume that }$E$\textit{\ is weakly sequentially complete and
that} $\mathcal{F}$ \textit{is a besselian paire of }$E$\textit{.} \textit{%
Then for each }$x\in E,$\textit{\ the series }$\sum b_{n}^{\ast }\left(
x\right) a_{n}$\textit{\ is unconditionally convergent.}

\item \textit{We assume that }$E^{\ast }$\textit{\ is weakly sequentially
complete and that} $\mathcal{F}$ \textit{is a besselian paire of }$E$\textit{%
.} \textit{Then for each }$x^{\ast }\in E^{\ast },$\textit{\ the series }$%
\sum x^{\ast }\left( a_{n}\right) b_{n}^{\ast }$\textit{\ is unconditionally
convergent.}
\end{enumerate}
\end{proposition}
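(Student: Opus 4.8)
The plan is to handle both parts in parallel. In each case I will first exploit the besselian inequality to show that the series in question is weakly unconditionally convergent, and then use the weak sequential completeness of the ambient space, together with the Orlicz--Pettis theorem, to upgrade this to unconditional norm convergence.

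For part (1), fix $x\in E$ and set $x_{n}:=b_{n}^{\ast }(x)a_{n}$. For every $x^{\ast }\in E^{\ast }$ the besselian property of $\mathcal{F}$ gives
\begin{equation*}
\sum_{n=1}^{+\infty }\left \vert x^{\ast }(x_{n})\right \vert =\sum_{n=1}^{+\infty }\left \vert b_{n}^{\ast }(x)\right \vert \left \vert x^{\ast }(a_{n})\right \vert \leq \mathcal{L}_{\mathcal{F}}\left \Vert x\right \Vert _{E}\left \Vert x^{\ast }\right \Vert _{E^{\ast }}<+\infty ,
\end{equation*}
so $\sum x_{n}$ is weakly unconditionally convergent. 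Now for an arbitrary $M\subseteq \mathbb{N}^{\ast }$, the partial sums $s_{N}:=\sum_{n\in M,\,n\leq N}x_{n}$ satisfy, for each $x^{\ast }\in E^{\ast }$, that $x^{\ast }(s_{N})=\sum_{n\in M,\,n\leq N}x^{\ast }(x_{n})$ converges as $N\rightarrow +\infty $, since this scalar series is absolutely convergent by the bound above. Hence $\lim_{N}x^{\ast }(s_{N})$ exists for every $x^{\ast }\in E^{\ast }$, and because $E$ is weakly sequentially complete there is $s\in E$ with $\lim_{N}x^{\ast }(s_{N})=x^{\ast }(s)$; that is, the subseries $\sum_{n\in M}x_{n}$ converges weakly. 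As $M$ was arbitrary, every subseries of $\sum x_{n}$ converges weakly, and the Orlicz--Pettis theorem then yields that $\sum x_{n}=\sum b_{n}^{\ast }(x)a_{n}$ converges unconditionally in norm.

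For part (2), the new ingredient is Theorem \ref{bess-ssi-dualbess}: since $\mathcal{F}$ is besselian, its dual paire $\mathcal{F}^{\ast }=\left( \left( b_{n}^{\ast },J_{E}(a_{n})\right) \right) _{n\in \mathbb{N}^{\ast }}$ is a besselian paire of $E^{\ast }$ with $\mathcal{L}_{\mathcal{F}^{\ast }}=\mathcal{L}_{\mathcal{F}}$. Fix $x^{\ast }\in E^{\ast }$ and set $\xi _{n}:=x^{\ast }(a_{n})b_{n}^{\ast }\in E^{\ast }$. Applying the besselian inequality for $\mathcal{F}^{\ast }$ to the pair $(x^{\ast },x^{\ast \ast })\in E^{\ast }\times E^{\ast \ast }$ and using $J_{E}(a_{n})(x^{\ast })=x^{\ast }(a_{n})$, we obtain for every $x^{\ast \ast }\in E^{\ast \ast }$
\begin{equation*}
\sum_{n=1}^{+\infty }\left \vert x^{\ast \ast }(\xi _{n})\right \vert =\sum_{n=1}^{+\infty }\left \vert x^{\ast }(a_{n})\right \vert \left \vert x^{\ast \ast }(b_{n}^{\ast })\right \vert \leq \mathcal{L}_{\mathcal{F}^{\ast }}\left \Vert x^{\ast }\right \Vert _{E^{\ast }}\left \Vert x^{\ast \ast }\right \Vert _{E^{\ast \ast }}<+\infty ,
\end{equation*}
so $\sum \xi _{n}$ is weakly unconditionally convergent in $E^{\ast }$. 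Repeating verbatim the subseries argument of part (1), now with the weak sequential completeness of $E^{\ast }$ in place of that of $E$, each subseries of $\sum \xi _{n}$ converges weakly in $E^{\ast }$, and Orlicz--Pettis again promotes this to unconditional norm convergence of $\sum x^{\ast }(a_{n})b_{n}^{\ast }$.

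I expect the only genuinely nontrivial step to be the passage from weak unconditional convergence to unconditional norm convergence; this is precisely where weak sequential completeness is indispensable, through the chain: every subseries is weakly Cauchy, hence (by weak sequential completeness) weakly convergent, hence (by Orlicz--Pettis) unconditionally norm-convergent. The remaining steps are direct applications of the besselian estimates, with Theorem \ref{bess-ssi-dualbess} doing the essential work of converting the hypothesis on $\mathcal{F}$ into the dual besselian estimate needed for the series living in $E^{\ast }$.
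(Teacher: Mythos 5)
Your proof is correct and follows essentially the same route as the paper: the besselian estimate (together with Theorem \ref{bess-ssi-dualbess} for part (2)) yields weak unconditional convergence, and weak sequential completeness then upgrades this to unconditional norm convergence. The only difference is that the paper invokes Orlicz's theorem as a single cited result at this last step, whereas you unpack it into its standard proof via weakly Cauchy subseries and the Orlicz--Pettis theorem.
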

\begin{proof}
\begin{enumerate}
\item Since $\mathcal{F}$ is a besselian paire of $E$, it follows that we have
for each $x\in E$ and $x^{\ast }\in E^{\ast }:$ 
\begin{align*}
\overset{+\infty }{\underset{n=1}{\sum }}\left \vert x^{\ast }\left(
b_{n}^{\ast }\left( x\right) a_{n}\right) \right \vert & =\overset{+\infty }{%
\underset{n=1}{\sum }}\left \vert b_{n}^{\ast }\left( x\right) \right \vert
\left \vert x^{\ast }\left( a_{n}\right) \right \vert \\
& \leq \mathcal{L}_{\mathcal{F}}\left \Vert x\right \Vert _{E}\left \Vert
x^{\ast }\right \Vert _{E^{\ast }} \\
& <+\infty
\end{align*}%
Hence the series $\sum b_{n}^{\ast }\left( x\right) a_{n}$ is weakly
unconditionally convergent. Hence, since $E$ is weakly sequentially
complete, it follows, thanks to Orlicz's theorem \cite{orl}; \cite[theorem of page 66]{woj},
, that the series $\sum b_{n}^{\ast }\left( x\right)
a_{n} $ is unconditionally convergent.\newline
\item Since $\mathcal{F}$ is a besselian paire of $E,$ it follows, thanks to
the theorem \ref{bess-ssi-dualbess}, that the paire $\mathcal{F}^{\ast }$ is a besselian paire of 
$E^{\ast }.$\ Hence we have for each $x^{\ast }\in E^{\ast }$ and $x^{\ast
\ast }\in E^{\ast \ast }$ : 
\begin{align*}
\overset{+\infty }{\underset{n=1}{\sum }}\left \vert x^{\ast \ast }\left(
x^{\ast }\left( a_{n}\right) b_{n}^{\ast }\right) \right \vert & =\overset{%
+\infty }{\underset{n=1}{\sum }}\left \vert x^{\ast }\left( a_{n}\right)
\right \vert \left \vert x^{\ast \ast }\left( b_{n}^{\ast }\right) \right
\vert \\
& \leq \mathcal{L}_{\mathcal{F}^{\ast }}\left \Vert x^{\ast }\right \Vert
_{E^{\ast }}\left \Vert x^{\ast \ast }\right \Vert _{E^{\ast \ast }} \\
& <+\infty
\end{align*}%
Hence for each $x^{\ast }\in E^{\ast },$ the series $\sum x^{\ast }\left(
a_{n}\right) b_{n}^{\ast }$ is weakly unconditionally convergent. Hence
since $E^{\ast }$ is weakly sequentially complete it follows thanks to
Orlicz's theorem \cite{orl}; \cite[theorem of page 66]{woj} that the
series $\sum x^{\ast }\left( a_{n}\right) b_{n}^{\ast }$ is, for each $%
x^{\ast }\in E^{\ast },$ unconditionally convergent.
\end{enumerate}
The proof of the proposition is then complete.
\end{proof}
\begin{proposition}
\begin{enumerate}
\item We assume that $E$\textit{\ is weakly
sequentially complete and that the paire} $\mathcal{F}^{\ast }$ \textit{is a
besselian Schauder frame of }$E^{\ast }$\textit{.} \textit{Then the paire} $%
\mathcal{F}$ \textit{is a besselian Schauder frame of }$\ E.$
\item We assume that the dual space $E^{*}$  is weakly
sequentially complete and that the paire $\mathcal{F}$  is a
besselian Schauder frame of  $E$. Then the paire $
\mathcal{F}^{\ast }$  is a besselian Schauder frame of  $ E^{*}$.
\end{enumerate}
\end{proposition}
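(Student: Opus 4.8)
The plan is to handle the two assertions by a common two-step scheme: use the results already available to secure convergence of the defining series, and then pin down the limit using the Schauder-frame hypothesis carried by the partner system. In each case the besselian half of the conclusion comes immediately from Theorem \ref{bess-ssi-dualbess}, so the real content is the convergence-to-the-right-limit statement.

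\textbf{First assertion.} Since $\mathcal{F}^{\ast}$ is in particular a besselian paire of $E^{\ast}$, Theorem \ref{bess-ssi-dualbess} gives that $\mathcal{F}$ is a besselian paire of $E$, which already settles the besselian part. Because $E$ is weakly sequentially complete, Proposition \ref{bess.imp.ucv}(1) then yields, for every $x\in E$, the unconditional convergence of $\sum b_n^{\ast}(x)a_n$ to some $z_x\in E$. To finish I would show $z_x=x$. Fix $x^{\ast}\in E^{\ast}$. Continuity of $x^{\ast}$ gives
\[
x^{\ast}(z_x)=\sum_{n=1}^{+\infty} b_n^{\ast}(x)\,x^{\ast}(a_n).
\]
On the other hand, $\mathcal{F}^{\ast}$ being a Schauder frame of $E^{\ast}$ means $\sum_{n} x^{\ast}(a_n)b_n^{\ast}=x^{\ast}$ in $E^{\ast}$; evaluating the partial sums at $x$ and using that point-evaluation is continuous on $E^{\ast}$ gives $\sum_{n} x^{\ast}(a_n)b_n^{\ast}(x)=x^{\ast}(x)$. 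The two series are termwise equal, so $x^{\ast}(z_x)=x^{\ast}(x)$ for every $x^{\ast}\in E^{\ast}$, and Hahn--Banach forces $z_x=x$. Hence $\mathcal{F}$ is a Schauder frame, thus a besselian Schauder frame, of $E$.

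\textbf{Second assertion.} Now $\mathcal{F}$ is besselian, so Theorem \ref{bess-ssi-dualbess} gives that $\mathcal{F}^{\ast}$ is a besselian paire of $E^{\ast}$. Since $E^{\ast}$ is weakly sequentially complete, Proposition \ref{bess.imp.ucv}(2) provides, for each $x^{\ast}\in E^{\ast}$, the unconditional convergence of $\sum x^{\ast}(a_n)b_n^{\ast}$ to some $w^{\ast}\in E^{\ast}$, and it remains to check $w^{\ast}=x^{\ast}$. For any $x\in E$, continuity of point-evaluation gives $w^{\ast}(x)=\sum_{n} x^{\ast}(a_n)b_n^{\ast}(x)$; meanwhile, as $\mathcal{F}$ is a Schauder frame of $E$ we have $x=\sum b_n^{\ast}(x)a_n$, so applying $x^{\ast}$ yields $x^{\ast}(x)=\sum_{n} b_n^{\ast}(x)x^{\ast}(a_n)=\sum_{n} x^{\ast}(a_n)b_n^{\ast}(x)=w^{\ast}(x)$. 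As $x\in E$ was arbitrary, $w^{\ast}=x^{\ast}$, which is exactly the statement that $\mathcal{F}^{\ast}$ is a Schauder frame of $E^{\ast}$; being also besselian, it is a besselian Schauder frame of $E^{\ast}$.

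The step I expect to be the crux in each part is the identification of the limit: Proposition \ref{bess.imp.ucv} delivers convergence but not the value of the sum, so the substance lies in matching that sum to $x$ (resp.\ $x^{\ast}$) through the frame expansion supplied by the partner system, exploiting that the scalar factors $b_n^{\ast}(x)\,x^{\ast}(a_n)$ and $x^{\ast}(a_n)\,b_n^{\ast}(x)$ coincide termwise. Everything else is bookkeeping, and the two directions are perfectly symmetric under the passage $\mathcal{F}\leftrightarrow\mathcal{F}^{\ast}$ together with the duality of the weak sequential completeness hypotheses.
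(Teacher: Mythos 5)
Your proof is correct and follows essentially the same route as the paper: Theorem \ref{bess-ssi-dualbess} plus Proposition \ref{bess.imp.ucv} give the besselian property and the convergence, and the limit is then identified via the frame expansion of the partner system. If anything, your identification of the limit in the second assertion (applying $x^{\ast}$ to $x=\sum b_n^{\ast}(x)a_n$) is stated more transparently than the paper's corresponding chain of equalities, but the underlying argument is the same.
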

\begin{proof}
\begin{enumerate}
\item Since $E$ is weakly sequentially complete and that $\mathcal{F}^{\ast }$
is a besselian Schauder frame of $E^{\ast }$ it follows, from the 
theorem \ref{bess-ssi-dualbess} and the proposition \ref{bess.imp.ucv}, 
that  $\mathcal{F}$ is a besselian paire of $E$
 and that the series $\sum b_{n}^{\ast }(x)a_{n}$ is
unconditionally convergent for each $x\in E$. Let us then
consider the mapping:
\begin{equation*}
\begin{array}{cccc}
S: & E & \rightarrow & E \\ 
& x & \mapsto & \overset{+\infty }{\underset{n=1}{\sum }}b_{n}^{\ast }\left(
x\right) a_{n}%
\end{array}%
\end{equation*}%
Then we have for each $x^{\ast }\in E^{\ast }$ and $x\in E$ : 
\begin{align*}
x^{\ast }\left( S(x)\right) & =\overset{+\infty }{\underset{n=1}{\sum }}%
x^{\ast }\left( a_{n}\right) b_{n}^{\ast }(x) \\
& =J_{E}\left( x\right) \left( \overset{+\infty }{\underset{n=1}{\sum }}%
x^{\ast }\left( a_{n}\right) b_{n}^{\ast }\right) \\
& =J_{E}\left( x\right) \left( x^{\ast }\right) \\
& =x^{\ast }\left( x\right)
\end{align*}%
Consequently, $S(x)=x$ , $x\in E$. So $\mathcal{F}$ is a besselian Schauder
frame of $E$.\newline
\item Since $E^{\ast }$ is weakly sequentially complete it follows, from the
theorem \eqref{bess-ssi-dualbess} and the proposition \ref{bess.imp.ucv},
 that $\mathcal{F}^{*}$ is a besselian paire of $E^{*}$ and that
the series $\sum J_{E}(a_{n})(x^{\ast })b_{n}^{\ast }$ is unconditionally
convergent for each $x^{\ast }\in E^{\ast }$. Let us then
consider the mapping :%
\begin{equation*}
\begin{array}{cccc}
T: & E^{\ast } & \rightarrow & E^{\ast } \\ 
& x^{\ast } & \mapsto & \overset{+\infty }{\underset{n=1}{\sum }}J_{E}\left(
a_{n}\right) \left( x^{\ast }\right) b_{n}^{\ast }%
\end{array}%
\end{equation*}%
Then we have for each $x^{\ast }\in E^{\ast }$ and $x\in E$: 
\begin{align*}
T(x^{\ast })(x)& =\overset{+\infty }{\underset{n=1}{\sum }}x^{\ast }\left(
a_{n}\right) b_{n}^{\ast }(x) \\
& =J_{E}\left( x\right) \left( \overset{+\infty }{\underset{n=1}{\sum }}%
J_{E}(a_{n})\left( x^{\ast }\right) b_{n}^{\ast }\right) \\
& =J_{E}\left( x\right) \left( x^{\ast }\right) \\
& =x^{\ast }\left( x\right)
\end{align*}%
Consequently we have, for each $x^{\ast }\in E^{\ast },$ $T(x^{\ast
})=x^{\ast }$. So $\mathcal{F}^{\ast }$ is a besselian Schauder frame of $%
E^{\ast }$.\newline
\end{enumerate}
The proof of the proposition is then complete.
\end{proof}
\begin{proposition}
\label{wsc.shrin.bound}
\begin{enumerate}
\item  Assume that  $E^{\ast }$ \textit{is weakly sequentially complete
and that }$\mathcal{F}$ \textit{is a besselian Schauder frame of }$E.$ 
\textit{Then} $\mathcal{F}$ \textit{is shrinking.}
\item   Assume that  $E$ \textit{is weakly sequentially complete and that 
}$\mathcal{F}$ \textit{is a besselian Schauder frame of }$E.$ \textit{Then} $%
\mathcal{F}$ \textit{is boundedly complete.}
\end{enumerate}
\end{proposition}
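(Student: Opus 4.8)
The plan is to obtain both assertions essentially for free from the results already in hand, exploiting that a besselian Schauder frame is in particular a besselian paire.

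For the first assertion I would argue as follows. Since $\mathcal{F}$ is a besselian Schauder frame of $E$, it is in particular a besselian paire of $E$; as $E^{\ast }$ is weakly sequentially complete, the second part of Proposition \ref{bess.imp.ucv} applies and tells us that, for every $x^{\ast }\in E^{\ast }$, the series $\sum x^{\ast }(a_{n})b_{n}^{\ast }$ converges (indeed unconditionally) in $E^{\ast }$. Convergence of this series for every $x^{\ast }$ is exactly the definition of a shrinking Schauder frame, so nothing more is needed.

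For the second assertion the point to watch is that bounded completeness requires the convergence of $\sum x^{\ast \ast }(b_{n}^{\ast })a_{n}$ for every $x^{\ast \ast }\in E^{\ast \ast }$, while the first part of Proposition \ref{bess.imp.ucv} only handles $\sum b_{n}^{\ast }(x)a_{n}=\sum J_{E}(x)(b_{n}^{\ast })a_{n}$, i.e. the functionals $x^{\ast \ast }$ in the range of the canonical embedding. To cover all of $E^{\ast \ast }$ I would pass to the dual paire via Theorem \ref{bess-ssi-dualbess}: since $\mathcal{F}$ is a besselian paire of $E$, the paire $\mathcal{F}^{\ast }=\left( \left( b_{n}^{\ast },J_{E}(a_{n})\right) \right) _{n\in \mathbb{N}^{\ast }}$ is a besselian paire of $E^{\ast }$, with $\mathcal{L}_{\mathcal{F}^{\ast }}=\mathcal{L}_{\mathcal{F}}$. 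Spelling out its besselian inequality with $x^{\ast }\in E^{\ast }$ in the role of the vector and $x^{\ast \ast }\in E^{\ast \ast }$ in the role of the functional, and using $J_{E}(a_{n})(x^{\ast })=x^{\ast }(a_{n})$, yields
\begin{equation*}
\sum_{n=1}^{+\infty }\left \vert x^{\ast \ast }(b_{n}^{\ast })\right \vert \left \vert x^{\ast }(a_{n})\right \vert \leq \mathcal{L}_{\mathcal{F}^{\ast }}\left \Vert x^{\ast \ast }\right \Vert _{E^{\ast \ast }}\left \Vert x^{\ast }\right \Vert _{E^{\ast }}<+\infty
\end{equation*}
for all $x^{\ast }\in E^{\ast }$ and $x^{\ast \ast }\in E^{\ast \ast }$.

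Since $x^{\ast }\bigl(x^{\ast \ast }(b_{n}^{\ast })a_{n}\bigr)=x^{\ast \ast }(b_{n}^{\ast })x^{\ast }(a_{n})$, this bound says exactly that, for each fixed $x^{\ast \ast }\in E^{\ast \ast }$, the series $\sum x^{\ast \ast }(b_{n}^{\ast })a_{n}$ is weakly unconditionally convergent in $E$. As $E$ is weakly sequentially complete, Orlicz's theorem then promotes weak unconditional convergence to unconditional convergence in $E$, hence in particular to convergence in $E$, which is precisely what bounded completeness demands. I expect the only genuine step to be this passage from $J_{E}(E)$ to all of $E^{\ast \ast }$; it is exactly the content supplied by the duality of the besselian property in Theorem \ref{bess-ssi-dualbess}, and with that in place both assertions reduce to reading off the definitions.
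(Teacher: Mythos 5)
Your proposal is correct and follows essentially the same route as the paper: part 1 is read off from the second item of Proposition \ref{bess.imp.ucv}, and part 2 passes to the dual paire via Theorem \ref{bess-ssi-dualbess} to get weak unconditional convergence of $\sum x^{\ast \ast }(b_{n}^{\ast })a_{n}$ for all $x^{\ast \ast }\in E^{\ast \ast }$, then applies Orlicz's theorem in the weakly sequentially complete space $E$. Your explicit remark on why one must go beyond $J_{E}(E)$ is exactly the point the paper's argument is built around.
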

\begin{proof}
\begin{enumerate}
\item Since $E^{\ast }$\ is weakly sequentially complete and $\mathcal{F}$
is a besselian Schauder frame of $E$, it follows from  the 
proposition \ref{bess.imp.ucv} that the series 
$\sum x^{\ast }\left( a_{n}\right) b_{n}^{\ast }$\ is, for each $x^{\ast
}\in E^{\ast },$ unconditionally convergent of $E$. Hence $\mathcal{F}$ is a
shrinking Schauder frame in $E^{*}$.
\item Since $\mathcal{F}$ is a besselian paire of $E$ it follows, 
thanks to theorem \ref{bess-ssi-dualbess}, that $\mathcal{F}^{\ast }$ is a besselian paire of $E^{\ast }.$
It follows that we have for each $x^{\ast }\in E^{\ast }$ and $x^{\ast \ast
}\in E^{\ast \ast }$: 
\begin{align*}
\overset{+\infty }{\underset{n=1}{\sum }}\left \vert x^{\ast }\left( x^{\ast
\ast }\left( b_{n}^{\ast }\right) a_{n}\right) \right \vert & =\overset{%
+\infty }{\underset{n=1}{\sum }}\left \vert x^{\ast }\left( a_{n}\right)
\right \vert \left \vert x^{\ast \ast }\left( b_{n}^{\ast }\right) \right
\vert \\
& \leq \mathcal{L}_{\mathcal{F}^{\ast }}\left \Vert x^{\ast }\right \Vert
_{E^{\ast }}\left \Vert x^{\ast \ast }\right \Vert _{E^{\ast \ast }} \\
& <+\infty
\end{align*}%
It follows that the series $\sum x^{\ast \ast }\left( b_{n}^{\ast }\right)
a_{n}$ is, for each $x^{\ast \ast }\in E^{\ast \ast },$ weakly
unconditionally convergent. Hence since $E$ is weakly sequentially complete
it follows, thanks to Orlicz's theorem \cite{orl}, \cite[theorem of
page 66]{woj} that the series $\sum x^{\ast \ast }\left( b_{n}^{\ast }\right)
a_{n}$ is unconditionally convergent for each $x^{\ast \ast }\in E^{\ast
\ast }$. Consequently $\mathcal{F}$ is a boundedly complete Schauder
frame of $E$.
\end{enumerate}
The proof of the proposition is then complete.
\end{proof}
\begin{proposition}
\label{shrin.iff.dualframe}
\textit{\ Assume that} $\mathcal{F}$ is a \textit{Schauder frame} \textit{of 
}$E.$ \textit{Then} $\mathcal{F}$\  \textit{is shrinking if and only if }$
\mathcal{F}^{\ast }$\textit{\ is a Schauder frame of }$E^{\ast }$\textit{.}
\end{proposition}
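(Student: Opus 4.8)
The plan is to observe that both conditions concern the \emph{same} series $\sum x^{\ast}(a_{n})b_{n}^{\ast}$ in $E^{\ast}$: shrinking asserts merely that this series converges, whereas $\mathcal{F}^{\ast}$ being a Schauder frame of $E^{\ast}$ asserts that it converges \emph{to }$x^{\ast}$. So the entire proof reduces to identifying the limit, and the Schauder frame hypothesis on $\mathcal{F}$ is precisely what will do that. The implication ``$\mathcal{F}^{\ast}$ a Schauder frame $\Rightarrow$ $\mathcal{F}$ shrinking'' is immediate: by definition of a Schauder frame of $E^{\ast}$, the series $\sum J_{E}(a_{n})(x^{\ast})b_{n}^{\ast}=\sum x^{\ast}(a_{n})b_{n}^{\ast}$ converges for every $x^{\ast}\in E^{\ast}$, which is exactly the defining condition of shrinking (this is the content of the second Remark above).

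For the converse I would assume $\mathcal{F}$ is shrinking and fix $x^{\ast}\in E^{\ast}$, so that the partial sums $S_{N}:=\sum_{n=1}^{N}x^{\ast}(a_{n})b_{n}^{\ast}$ converge in the norm of $E^{\ast}$ to some $y^{\ast}\in E^{\ast}$. It then remains only to show $y^{\ast}=x^{\ast}$. To this end I would evaluate both sides at an arbitrary $x\in E$, using that norm convergence in $E^{\ast}$ forces scalar convergence $S_{N}(x)\to y^{\ast}(x)$, together with the Schauder frame expansion $x=\sum_{n=1}^{+\infty}b_{n}^{\ast}(x)a_{n}$ of $\mathcal{F}$ and the continuity of $x^{\ast}$:
\begin{equation*}
y^{\ast}(x)=\lim_{N\to+\infty}S_{N}(x)=\sum_{n=1}^{+\infty}x^{\ast}(a_{n})b_{n}^{\ast}(x)=x^{\ast}\Bigl(\sum_{n=1}^{+\infty}b_{n}^{\ast}(x)a_{n}\Bigr)=x^{\ast}(x).
\end{equation*}
Since this holds for every $x\in E$, we obtain $y^{\ast}=x^{\ast}$, that is $\sum J_{E}(a_{n})(x^{\ast})b_{n}^{\ast}=x^{\ast}$ in $E^{\ast}$, which is exactly the statement that $\mathcal{F}^{\ast}$ is a Schauder frame of $E^{\ast}$.

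I expect no genuine obstacle in this argument. The two steps that deserve a word of justification are the passage from norm convergence of $S_{N}$ in $E^{\ast}$ to the scalar convergence $S_{N}(x)\to y^{\ast}(x)$ (immediate, since evaluation at a fixed $x$ is $1$-Lipschitz on $E^{\ast}$), and the interchange of $x^{\ast}$ with the frame expansion of $x$ (justified by the continuity and linearity of $x^{\ast}$ applied to a series that converges in $E$ by the Schauder frame property). The conceptual content, rather than any technical difficulty, is simply that the shrinking hypothesis furnishes the \emph{existence} of an $E^{\ast}$-limit for $\sum x^{\ast}(a_{n})b_{n}^{\ast}$, while the frame property of $\mathcal{F}$ \emph{identifies} that limit as $x^{\ast}$.

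I note that this proof is of the same flavour as, and indeed strengthens, the second Remark: there one assumed a priori that $\mathcal{F}^{\ast}$ was a Schauder frame and deduced shrinking, whereas here the harder direction shows that for a Schauder frame $\mathcal{F}$ the mere convergence of the dual series already forces it to reproduce $x^{\ast}$, so that no separate verification of the ``correct limit'' is needed.
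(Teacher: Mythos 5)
Your proof is correct and follows essentially the same route as the paper: the reverse implication is definitional, and the forward implication combines the shrinking hypothesis (existence of the norm limit of $\sum x^{\ast}(a_{n})b_{n}^{\ast}$) with the frame expansion $x=\sum_{n}b_{n}^{\ast}(x)a_{n}$ and the continuity of $x^{\ast}$ to identify that limit as $x^{\ast}$. The only cosmetic difference is that you identify the limit by pointwise evaluation, whereas the paper computes $\bigl\Vert x^{\ast}-\sum_{n=1}^{m}x^{\ast}(a_{n})b_{n}^{\ast}\bigr\Vert_{E^{\ast}}$ directly as the norm of the tail of the convergent series; the ingredients are identical.
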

\begin{proof}
Assume that $\mathcal{F}=\left( \left( a_{n},b_{n}^{\ast }\right) \right)
_{n\in \mathbb{N}^{\ast }}$ is shrinking. For each $x^{\ast }\in E^{\ast }$
and $m\in\mathbb{N}^{\ast } $ we have 
\begin{align*}
\left \Vert x^{\ast }-\overset{m}{\underset{n=1}{\sum }}J_{E}(a_{n})\left(
x^{\ast }\right) b_{n}^{\ast }\right \Vert _{E^{\ast }}& =\underset{x\in 
\mathbb{B}_{E}}{\sup }\left \vert x^{\ast }(x)-\overset{m}{\underset{n=1}{%
\sum }}x^{\ast }(a_{n})b_{n}^{\ast }(x)\right \vert \\
& =\underset{x\in \mathbb{B}_{E}}{\sup }\left \vert x^{\ast }\left( \overset{%
+\infty }{\underset{n=1}{\sum }}b_{n}^{\ast }(x)a_{n}\right) -x^{\ast
}\left( \overset{m}{\underset{n=1}{\sum }}b_{n}^{\ast }(x)a_{n})\right)
\right \vert \\
& =\underset{x\in \mathbb{B}_{E}}{\sup }\left \vert x^{\ast }\left( \overset{%
+\infty }{\underset{n=m+1}{\sum }}b_{n}^{\ast }(x)a_{n}\right) \right \vert
\\
& =\underset{x\in \mathbb{B}_{E}}{\sup }\left \vert \left( \overset{+\infty }%
{\underset{n=m+1}{\sum }}x^{\ast }(a_{n})b_{n}^{\ast }\right) (x)\right \vert
\\
& =\left \Vert \overset{+\infty }{\underset{n=m+1}{\sum }}x^{\ast
}(a_{n})b_{n}^{\ast }\right \Vert _{E^{\ast }}
\end{align*}%
It follows that the series $\sum J_{E}(a_{n})\left( x^{\ast }\right)
b_{n}^{\ast }$ is convergent to $x^{\ast }$. Consequently, $\mathcal{F}%
^{\ast }$ is a Schauder frame of $E^{\ast }$.

Assume now that $\mathcal{F}^{\ast }$ is a Schauder frame of $E^{\ast }$.
Then for each $x^{\ast }\in E^{\ast },$ the series $\sum J_{E}(a_{n})\left(
x^{\ast }\right) b_{n}^{\ast }=\sum x^{\ast }(a_{n})b_{n}^{\ast }$ is
convergent to $x^{\ast }.$ Hence $\mathcal{F}$ is a shrinking Schauder frame
of $E$.

The proof of the proposition is then achieved.
\end{proof}
\begin{definition}\cite[page 42, definition
1.6.2]{meg}.
A mapping $\omega :X\rightarrow 
\mathbb{R}^{+}$ is said to be countably subadditive  if $\omega \left( \underset{n=1}{\overset{+\infty }{\sum }}%
z_{n}\right) \leq \underset{n=1}{\overset{+\infty }{\sum }}\omega \left(
z_{n}\right) $ for each convergent series $\sum z_{n}$ in $X$.

\end{definition}
\begin{theorem}
\label{zabr}
$\mathcal{F}$ \textit{is a besselian paire of }$E$\textit{\ if and only if
the following condition holds for each }$x^{\ast }\in E^{\ast }$\textit{\
and }$x^{\ast \ast }\in E^{\ast \ast }:$%
\begin{equation}
\underset{n=1}{\overset{+\infty }{\sum }}\left \vert x^{\ast \ast }\left(
b_{n}^{\ast }\right) \right \vert \left \vert x^{\ast }\left( a_{n}\right)
\right \vert <+\infty  \label{w}
\end{equation}
\end{theorem}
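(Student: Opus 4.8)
The plan is to recognise that condition \eqref{w} is precisely the pointwise (non-uniform) form of the statement ``$\mathcal{F}^{\ast}$ is a besselian paire of $E^{\ast}$'', and then to promote pointwise finiteness to a uniform estimate by two nested applications of the uniform boundedness principle; Theorem \ref{bess-ssi-dualbess} finally transfers the conclusion from $\mathcal{F}^{\ast}$ back to $\mathcal{F}$.

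One implication is immediate. If $\mathcal{F}$ is a besselian paire of $E$, then by Theorem \ref{bess-ssi-dualbess} the dual paire $\mathcal{F}^{\ast}$ is a besselian paire of $E^{\ast}$, which (recalling $J_{E}(a_{n})(x^{\ast})=x^{\ast}(a_{n})$) means $\sum_{n}\left\vert x^{\ast\ast}(b_{n}^{\ast})\right\vert\left\vert x^{\ast}(a_{n})\right\vert\leq \mathcal{L}_{\mathcal{F}^{\ast}}\left\Vert x^{\ast}\right\Vert_{E^{\ast}}\left\Vert x^{\ast\ast}\right\Vert_{E^{\ast\ast}}<+\infty$ for all $x^{\ast},x^{\ast\ast}$; in particular \eqref{w} holds.

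For the converse, assume \eqref{w}. First I would fix $x^{\ast}\in E^{\ast}$ and consider, for each $N$, the bounded linear operator $T_{N}^{x^{\ast}}\colon E^{\ast\ast}\to l^{1}(\mathbb{K})$ sending $x^{\ast\ast}$ to the finitely supported sequence $(x^{\ast}(a_{n})\,x^{\ast\ast}(b_{n}^{\ast}))_{n\leq N}$. Condition \eqref{w} gives $\sup_{N}\left\Vert T_{N}^{x^{\ast}}(x^{\ast\ast})\right\Vert_{l^{1}(\mathbb{K})}=\sum_{n}\left\vert x^{\ast}(a_{n})\right\vert\left\vert x^{\ast\ast}(b_{n}^{\ast})\right\vert<+\infty$ for each $x^{\ast\ast}$, so Banach--Steinhaus on the Banach space $E^{\ast\ast}$ produces a constant $C(x^{\ast})$ with $\sum_{n}\left\vert x^{\ast}(a_{n})\right\vert\left\vert x^{\ast\ast}(b_{n}^{\ast})\right\vert\leq C(x^{\ast})\left\Vert x^{\ast\ast}\right\Vert_{E^{\ast\ast}}$ for all $x^{\ast\ast}$. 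Then I would run a second uniform boundedness argument, now over $x^{\ast}$. For $N$, $x^{\ast\ast}\in\mathbb{B}_{E^{\ast\ast}}$ and scalars $\varepsilon=(\varepsilon_{n})_{n\leq N}$ with $\left\vert\varepsilon_{n}\right\vert\leq 1$, set $\Lambda_{N,x^{\ast\ast},\varepsilon}:=J_{E}\big(\sum_{n=1}^{N}\varepsilon_{n}\left\vert x^{\ast\ast}(b_{n}^{\ast})\right\vert a_{n}\big)\in E^{\ast\ast}$, so that $\Lambda_{N,x^{\ast\ast},\varepsilon}(x^{\ast})=\sum_{n=1}^{N}\varepsilon_{n}\left\vert x^{\ast\ast}(b_{n}^{\ast})\right\vert x^{\ast}(a_{n})$ and, since $J_{E}$ is an isometry, $\left\Vert\Lambda_{N,x^{\ast\ast},\varepsilon}\right\Vert_{E^{\ast\ast}}\leq\big\Vert\sum_{n=1}^{N}\varepsilon_{n}\left\vert x^{\ast\ast}(b_{n}^{\ast})\right\vert a_{n}\big\Vert_{E}$. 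Choosing the $\varepsilon_{n}$ to align the phases of $x^{\ast}(a_{n})$ yields $\sup_{\varepsilon}\left\vert\Lambda_{N,x^{\ast\ast},\varepsilon}(x^{\ast})\right\vert=\sum_{n=1}^{N}\left\vert x^{\ast\ast}(b_{n}^{\ast})\right\vert\left\vert x^{\ast}(a_{n})\right\vert$. By the first step this family is pointwise bounded on $E^{\ast}$ (by $C(x^{\ast})$), so a second application of Banach--Steinhaus gives $A:=\sup\left\Vert\Lambda_{N,x^{\ast\ast},\varepsilon}\right\Vert_{E^{\ast\ast}}<+\infty$; unwinding the suprema over $\varepsilon$, over $N$ and over $x^{\ast\ast}\in\mathbb{B}_{E^{\ast\ast}}$ yields $\sum_{n}\left\vert x^{\ast\ast}(b_{n}^{\ast})\right\vert\left\vert x^{\ast}(a_{n})\right\vert\leq A\left\Vert x^{\ast}\right\Vert_{E^{\ast}}\left\Vert x^{\ast\ast}\right\Vert_{E^{\ast\ast}}$, i.e.\ $\mathcal{F}^{\ast}$ is a besselian paire of $E^{\ast}$. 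Theorem \ref{bess-ssi-dualbess} then gives that $\mathcal{F}$ is a besselian paire of $E$.

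I expect the main obstacle to be exactly this passage from pointwise finiteness to a uniform estimate. The delicate point is that the absolute values destroy linearity, so Banach--Steinhaus cannot be applied directly; the phase-aligning sign device encoded in the family $\Lambda_{N,x^{\ast\ast},\varepsilon}$ is what linearises the quantity so that the principle becomes available, and one genuinely needs the two nested applications — the first to eliminate the dependence on $x^{\ast\ast}$ (producing $C(x^{\ast})$), the second to control $C(x^{\ast})$ by $\left\Vert x^{\ast}\right\Vert_{E^{\ast}}$.
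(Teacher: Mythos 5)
Your proof is correct, but it follows a genuinely different route from the paper's in the converse direction. The paper first quotes a result on weakly unconditionally convergent series (Wojtaszczyk, Prop.~4, p.~59) to get the two pointwise bounds with constants $C_{x}$ and $D_{x^{\ast}}$, then introduces the seminorms $f(x)=\sup_{u^{\ast}\in\mathbb{B}_{E^{\ast}}}\sum_{n}\left\vert b_{n}^{\ast}(x)\right\vert\left\vert u^{\ast}(a_{n})\right\vert$ and $g(y^{\ast})=\sup_{u\in\mathbb{B}_{E}}\sum_{n}\left\vert b_{n}^{\ast}(u)\right\vert\left\vert y^{\ast}(a_{n})\right\vert$, proves they are countably subadditive, invokes Zabre\u{\i}ko's lemma to get their continuity, and finally deduces the joint continuity of the bilinear map $U:E\times E^{\ast}\rightarrow l^{1}(\mathbb{K})$, which yields the besselian estimate for $\mathcal{F}$ directly on $E\times E^{\ast}$. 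You instead stay with the classical Banach--Steinhaus theorem, applied twice: once on $E^{\ast\ast}$ to produce $C(x^{\ast})$, and once on $E^{\ast}$ to the sign-aligned family $\Lambda_{N,x^{\ast\ast},\varepsilon}=J_{E}\bigl(\sum_{n=1}^{N}\varepsilon_{n}\left\vert x^{\ast\ast}(b_{n}^{\ast})\right\vert a_{n}\bigr)$, obtaining that $\mathcal{F}^{\ast}$ is besselian on $E^{\ast}$ and then descending to $E$ via the easy half of Theorem \ref{bess-ssi-dualbess} (the half that uses only the isometry $J_{E}$, so there is no circularity and no appeal to local reflexivity at this stage). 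Your phase-alignment device is the right way to linearise the absolute values, and the uncountable index set of the $\Lambda$'s is harmless for the uniform boundedness principle. What your approach buys is economy of means: it avoids Zabre\u{\i}ko's lemma, the verification of countable subadditivity, and the separate-to-joint continuity argument for bilinear maps. What the paper's approach buys is a direct proof on $E\times E^{\ast}$ that does not pass through the bidual estimate, and the seminorms $f,g$ it constructs are reused implicitly in the continuity argument for $U$. Both are valid; yours is arguably the more elementary.
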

\begin{proof}
Since $\mathcal{F}$ is a  besselian paire of $E$, it follows 
thanks to the theorem \ref{bess-ssi-dualbess},
 that the paire $\mathcal{F}^{\ast }$ is a besselian paire of $E^{*}$. Hence
the follwing inequality holds for each $x^{\ast }\in E^{\ast }$\textit{\ }and%
\textit{\ }$x^{\ast \ast }\in E^{\ast \ast }:$%
\begin{eqnarray*}
\underset{n=1}{\overset{+\infty }{\sum }}\left \vert x^{**}\left(
b_{n}^{\ast }\right) \right \vert \left \vert x^{*}\left( a_{n}\right) \right \vert &=&\underset{n=1}{\overset{+\infty }{%
\sum }}\left \vert x^{\ast \ast }\left( b_{n}^{\ast }\right) \right \vert
\left \vert J_{E}\left( a_{n}\right) \left( x^{\ast }\right) \right \vert \\
&\leq &\mathcal{L}_{\mathcal{F}^{\ast }}\left \Vert x^{\ast \ast }\right
\Vert _{E^{\ast \ast }}\left \Vert x^{\ast }\right \Vert _{E^{\ast }}
\end{eqnarray*}%
Consequently the condition (\ref{w}) holds for each $x^{\ast }\in E^{\ast }$
and $x^{\ast \ast }\in E^{\ast \ast }.$

Assume now that the condition (\ref{w}) holds for each $x^{\ast }\in E^{\ast
}$ and $x^{\ast \ast }\in E^{\ast \ast }$. It follows that the series $\sum
J_{E}(x)\left( b_{n}^{\ast } \right)x^{*}\left( a_{n}\right)=
\sum x^{\ast }\left( b_{n}^{\ast }\left( x\right) a_{n}\right) $ and $\sum
x^{\ast \ast }\left( x^{\ast }\left( a_{n}\right) b_{n}^{\ast }\right) $ are
unconditionally convergent for all $x\in E,$ $x^{\ast }\in E^{\ast }$ and $%
x^{**}\in E^{\ast \ast }.$ Consequently, there exists 
\cite[proposition 4, page 59 ]{woj}
for each $x\in E$ and $x^{\ast }\in E^{\ast }$ a
constants $C_{x},D_{x^{\ast }}>0$ depending respectively on $x$ and $x^{\ast
}$ such that : 
\begin{equation}
\; \overset{+\infty }{\underset{n=1}{\sum }}\left \vert b_{n}^{\ast
}(x)\right \vert \left \vert x^{\ast }(a_{n})\right \vert \leq C_{x}\left
\Vert x^{\ast }\right \Vert _{E^{\ast }},\;x^{\ast }\in E^{\ast }
\label{closed-graph1}
\end{equation}%
\begin{equation}
\overset{+\infty }{\underset{n=1}{\sum }}\left \vert x^{\ast \ast
}(b_{n}^{\ast })\right \vert \left \vert x^{\ast }(a_{n})\right \vert \leq
D_{x^{\ast }}\left \Vert x^{\ast \ast }\right \Vert _{E^{\ast \ast }}
\label{closed-graph2}
\end{equation}%
It follows from the inequality (\ref{closed-graph2}) that : 
\begin{equation*}
\overset{+\infty }{\underset{n=1}{\sum }}\left \vert J_{E}(x)(b_{n}^{\ast
})\right \vert \left \vert y^{\ast }(a_{n})\right \vert \leq D_{y^{\ast
}}\left \Vert J_{E}(x)\right \Vert _{E^{\ast \ast }},\;x\in E,\text{ }%
y^{\ast }\in E^{\ast }
\end{equation*}%
that is : 
\begin{equation}
\overset{+\infty }{\underset{n=1}{\sum }}\left \vert b_{n}^{\ast }(x)\right
\vert \left \vert y^{\ast }(a_{n})\right \vert \leq D_{y^{\ast }}\left \Vert
x\right \Vert _{E},\;x\in E,\text{ }y^{\ast }\in E^{\ast }
\label{closed-graph3}
\end{equation}%
The inequalities (\ref{closed-graph1}) and (\ref{closed-graph3})\ entail
that the following mappings :%
\begin{equation*}
\begin{array}{cccc}
f: & E & \rightarrow & 
\mathbb{R}
^{+} \\ 
& x & \mapsto & \underset{u^{\ast }\in \mathbb{B}_{E^{\ast }}}{\sup }\left( 
\overset{+\infty }{\underset{n=1}{\sum }}\left \vert b_{n}^{\ast }(x)\right
\vert \left \vert u^{\ast }(a_{n})\right \vert \right)%
\end{array}%
\end{equation*}%
\begin{equation*}
\begin{array}{cccc}
g: & E^{\ast } & \rightarrow & 
\mathbb{R}
^{+} \\ 
& y^{\ast } & \mapsto & \underset{u\in \mathbb{B}_{E}}{\sup }\left( \overset{%
+\infty }{\underset{n=1}{\sum }}\left \vert b_{n}^{\ast }(u)\right \vert
\left \vert y^{\ast }(a_{n})\right \vert \right)%
\end{array}%
\end{equation*}%
are well-defined. We prove by direct computations that $f$ and $g$ are
seminorms on $E$ and $E^{\ast }$ respectively and that we have for every$%
\;x\in E\;$ and $x^{\ast }\in E^{\ast }$: 
\begin{equation*}
\left \{ 
\begin{array}{c}
\overset{+\infty }{\underset{n=1}{\sum }}\left \vert b_{n}^{\ast }(x)\right
\vert \left \vert x^{\ast }(a_{n})\right \vert \leq f(x)\left \Vert x^{\ast
}\right \Vert _{E^{\ast }} \\ 
\overset{+\infty }{\underset{n=1}{\sum }}\left \vert b_{n}^{\ast }(x)\right
\vert \left \vert x^{\ast }(a_{n})\right \vert \leq g(x^{\ast })\left \Vert
x\right \Vert _{E}%
\end{array}%
\right.
\end{equation*}%
Let us prove now that $f$ and $g$ are both countably subadditive. Indeed,
let $\sum v_{k}$ be a convergent series in the Banach space $E$. Then we
have for each $x^{\ast }\in E^{\ast }$: 
\begin{align*}
\overset{+\infty }{\underset{n=1}{\sum }}\left \vert b_{n}^{\ast }\left( 
\overset{+\infty }{\underset{k=1}{\sum }}v_{k}\right) \right \vert \left
\vert x^{\ast }(a_{n})\right \vert & \leq \overset{+\infty }{\underset{n=1}{%
\sum }}\left( \overset{+\infty }{\underset{k=1}{\sum }}\left \vert
b_{n}^{\ast }(v_{k})\right \vert \left \vert x^{\ast }(a_{n})\right \vert
\right) \\
& \leq \overset{+\infty }{\underset{k=1}{\sum }}\left( \overset{+\infty }{%
\underset{n=1}{\sum }}\left \vert b_{n}^{\ast }(v_{k})\right \vert \left
\vert x^{\ast }(a_{n})\right \vert \right) \\
& \leq \left( \overset{+\infty }{\underset{k=1}{\sum }}f(v_{k})\right) \left
\Vert x^{\ast }\right \Vert _{E^{\ast }}
\end{align*}%
It follows that : 
\begin{equation*}
f\left( \overset{+\infty }{\underset{k=1}{\sum }}v_{k}\right) \leq \overset{%
+\infty }{\underset{k=1}{\sum }}f(v_{k})
\end{equation*}%
Hence $f$ is countably subadditive. We prove similarly that $g$ is countably
subadditive. Thanks to Zabre\u{\i}ko's lemma \cite{zbr}, 
\cite[lemma 1.6.3., page 42]{meg}, that $f$ (resp. $g$) is continuous on $E$ (resp. $E^{\ast
} $).\newline
We consider now the mapping : 
\begin{equation*}
\begin{array}{cccc}
U: & E\times E^{\ast } & \rightarrow & l^{1}(\mathbb{K}) \\ 
& (x,x^{\ast }) & \mapsto & \left( b_{n}^{\ast }(x)x^{\ast }(a_{n})\right)
_{n\in \mathbb{N}^{\ast }}%
\end{array}%
\end{equation*}%
It is clear that $U$ is well-defined since the numerical series $\sum
\left
\vert b_{n}^{\ast }(x)\right \vert \left \vert y^{\ast
}(a_{n})\right
\vert $ is convergent for each $x\in E$ and $x^{\ast }\in
E^{\ast }$. Furthermore $U$ is bilinear. Let us show that $U$ is continuous.
Indeed, let $x\in E$ and $x^{\ast }\in E^{\ast }$ and $(x_{k},x_{k}^{\ast
})_{k\in \mathbb{N}^{\ast }}$ be a sequence in $E\times E^{\ast }$ which is
convergent to $(x,x^{\ast })$. We have for every $k\in \mathbb{N}^{\ast }$ : 
\begin{align*}
\left \Vert U((x,x^{\ast }))-U((x_{k},x_{k}^{\ast }))\right \Vert _{l^{1}(%
\mathbb{K})}& =\overset{+\infty }{\underset{n=1}{\sum }}\left \vert
b_{n}^{\ast }(x)x^{\ast }(a_{n})-b_{n}^{\ast }(x_{k})x_{k}^{\ast
}(a_{n})\right \vert \\
& =\overset{+\infty }{\underset{n=1}{\sum }}\left \vert b_{n}^{\ast
}(x-x_{k})x^{\ast }(a_{n})-b_{n}^{\ast }(x_{k})(x_{k}^{\ast }-x^{\ast
})(a_{n})\right \vert \\
& \leq \overset{\infty }{\underset{n=1}{\sum }}\left \vert b_{n}^{\ast
}(x-x_{k})\right \vert \left \vert x^{\ast }(a_{n})\right \vert +\overset{%
\infty }{\underset{k=0}{\sum }}\left \vert b_{n}^{\ast }(x_{k})\right \vert
\left \vert (x_{k}^{\ast }-x^{\ast })(a_{n})\right \vert \\
& \leq f(x-x_{k})\left \Vert x^{\ast }\right \Vert _{E^{\ast
}}+g(x_{k}^{\ast }-x^{\ast })\left \Vert x_{k}\right \Vert _{E}
\end{align*}%
But $f$ is continuous on $E$ and $g$ is continuous on $E^{\ast }$. It
follows that : 
\begin{equation*}
\lim_{k\rightarrow +\infty }f(x-x_{k})=\lim_{k\rightarrow +\infty
}g(x_{k}-x^{\ast })=0\; \;
\end{equation*}%
Consequently : 
\begin{equation*}
\lim_{k\rightarrow +\infty }\left \Vert U((x,x^{\ast
}))-U((x_{k},x_{k}^{\ast }))\right \Vert _{l^{1}(\mathbb{K})}=0
\end{equation*}%
Hence the bilinear mapping $U:E\times E^{\ast }\longrightarrow l^{1}(\mathbb{%
K})$ is continuous. It follows that there exists a constant $C>0$ such that
: 
\begin{equation*}
\overset{+\infty }{\underset{n=1}{\sum }}\left \vert b_{n}^{\ast }(x)\right
\vert \left \vert x^{\ast }(a_{n})\right \vert \leq C\left \Vert x\right
\Vert _{E}\left \Vert x^{\ast }\right \Vert _{E^{\ast }}
\end{equation*}%
for every $x\in E$ and $x^{\ast }\in E^{\ast }$. It follows that $\mathcal{F}
$ is a besselian paire of $E$.

Hence the proof of the theorem is then complete.
\end{proof}
\begin{lemma}\cite{bro}.
The linear mapping :
\begin{equation*}
\begin{array}{cccc}
P_{X}: & X^{\ast \ast \ast } & \rightarrow & X^{\ast } \\ 
& u^{\ast \ast \ast } & \mapsto & u^{\ast \ast \ast }\circ J_{X}%
\end{array}%
\end{equation*}%
 is continuous from $X^{\ast \ast \ast }$ onto $X^{\ast }$.
\end{lemma}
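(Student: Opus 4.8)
The plan is to verify in turn that $P_X$ is well-defined into $X^{\ast}$, that it is bounded, and that it is surjective. For the first point, observe that for $u^{\ast\ast\ast}\in X^{\ast\ast\ast}$ the map $u^{\ast\ast\ast}\circ J_X$ is the composite of the two bounded linear maps $J_X:X\to X^{\ast\ast}$ and $u^{\ast\ast\ast}:X^{\ast\ast}\to\mathbb{K}$, hence a bounded linear functional on $X$, i.e. an element of $X^{\ast}$; linearity of $u^{\ast\ast\ast}\mapsto u^{\ast\ast\ast}\circ J_X$ is immediate, so $P_X$ is a well-defined linear map.

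For boundedness I would estimate the operator norm directly. Since $J_X$ is an isometry (as recalled above), we have $\left\Vert J_X(x)\right\Vert_{X^{\ast\ast}}=\left\Vert x\right\Vert_X\leq 1$ for every $x\in\mathbb{B}_X$, and therefore
\begin{equation*}
\left\Vert P_X(u^{\ast\ast\ast})\right\Vert_{X^{\ast}}=\underset{x\in\mathbb{B}_X}{\sup}\left\vert u^{\ast\ast\ast}(J_X(x))\right\vert\leq\left\Vert u^{\ast\ast\ast}\right\Vert_{X^{\ast\ast\ast}}.
\end{equation*}
Thus $P_X$ is continuous, with $\left\Vert P_X\right\Vert\leq 1$.

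The surjectivity is where the single genuine idea enters: I would exhibit an explicit right inverse, namely the canonical embedding $J_{X^{\ast}}:X^{\ast}\to X^{\ast\ast\ast}$. Given any $x^{\ast}\in X^{\ast}$, for every $x\in X$ one computes
\begin{equation*}
P_X(J_{X^{\ast}}(x^{\ast}))(x)=J_{X^{\ast}}(x^{\ast})(J_X(x))=J_X(x)(x^{\ast})=x^{\ast}(x),
\end{equation*}
where the second equality is the definition of $J_{X^{\ast}}$ and the third the definition of $J_X$. Hence $P_X(J_{X^{\ast}}(x^{\ast}))=x^{\ast}$, so every $x^{\ast}\in X^{\ast}$ lies in the range of $P_X$, and $P_X$ is onto.

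I expect no real obstacle; the only step requiring care is keeping the three layers of duality straight and applying the definitions of $J_X$ and $J_{X^{\ast}}$ in the right order, so that the identity $P_X\circ J_{X^{\ast}}=\mathrm{Id}_{X^{\ast}}$ emerges cleanly. The continuity rests entirely on $J_X$ being an isometry, and the surjectivity on the fact that the canonical embedding $J_{X^{\ast}}$ provides a right inverse for $P_X$.
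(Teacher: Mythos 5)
Your argument is correct and complete: well-definedness and linearity are immediate, the bound $\left\Vert P_X(u^{\ast\ast\ast})\right\Vert_{X^{\ast}}\leq\left\Vert u^{\ast\ast\ast}\right\Vert_{X^{\ast\ast\ast}}$ follows from $J_X$ being an isometry, and the identity $P_X\circ J_{X^{\ast}}=\mathrm{Id}_{X^{\ast}}$ gives surjectivity via an explicit right inverse. The paper supplies no proof of this lemma, citing only Brown's note, so there is no internal argument to compare against; what you have written is the standard proof of this classical fact and fills that gap correctly.
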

\begin{remark}
\label{rephrased.zabr}
Using the definition of weakly unconditionally convergent series in a
Banach, the theorem \ref{zabr} can be rephrased as $:\mathcal{F}$\textit{\ is a
besselian paire of }$E$\textit{\ if and only if for each }$x^{\ast }\in
E^{\ast }$\textit{\ the series }$\sum x^{\ast }\left( a_{n}\right)
b_{n}^{\ast }$\textit{\ is weakly unconditionally convergent.}

Relying on the theorem \ref{zabr}, we obtain a new proof of
 theorem \ref{bess-ssi-dualbess} which do
not use the principle of local reflexivity.
\end{remark}
\begin{proof}
\begin{enumerate}
\item Assume that $\mathcal{F}$ is a besselian paire. Then, thanks to the
theorem \ref{zabr}, the following condition holds for each $x^{\ast \ast }\in
E^{\ast \ast }$ and $x^{\ast \ast \ast }\in E^{\ast \ast \ast }:$ 
\begin{eqnarray*}
\underset{n=1}{\overset{+\infty }{\sum }}\left \vert x^{\ast \ast }\left(
b_{n}^{\ast }\right) \right \vert \left \vert x^{\ast \ast \ast }\left(
J_{E}\left( a_{n}\right) \right) \right \vert &=&\underset{n=1}{\overset{%
+\infty }{\sum }}\left \vert P_{E}\left( x^{\ast \ast \ast }\right) \left(
a_{n}\right) \right \vert \left \vert x^{\ast \ast }\left( b_{n}^{\ast
}\right) \right \vert \\
&<&+\infty
\end{eqnarray*}%
It follows, thanks to the theorem \ref{zabr}, that $\mathcal{F}^{\ast }$ is a
besselian paire of $E^{\ast }.$
\item Assume  now that $\mathcal{F}^{\ast }$ is a besselian paire of $E^{\ast }$.
Let $x^{\ast }\in E^{\ast }$ and $x^{\ast \ast }\in E^{\ast \ast }.$ Then
there exists $y^{\ast \ast \ast }\in E^{\ast \ast \ast }$ such that $x^{\ast
}=P_{E}\left( y^{\ast \ast \ast }\right) $. It follows, by virtue of the theorem
\ref{zabr}, that :%
\begin{eqnarray*}
\underset{n=1}{\overset{+\infty }{\sum }}\left \vert y^{\ast \ast }\left(
b_{n}^{\ast }\right) \right \vert \left \vert x^{\ast }\left( a_{n}\right)
\right \vert &=&\underset{n=1}{\overset{+\infty }{\sum }}\left \vert y^{\ast
\ast }\left( b_{n}^{\ast }\right) \right \vert \left \vert P_{E}\left(
y^{\ast \ast \ast }\right) \left( a_{n}\right) \right \vert \\
&=&\underset{n=1}{\overset{+\infty }{\sum }}\left \vert y^{\ast \ast }\left(
b_{n}^{\ast }\right) \right \vert \left \vert y^{\ast \ast \ast }\left(
J_{E}\left( a_{n}\right) \right) \right \vert \\
&<&+\infty
\end{eqnarray*}%
Consequently  $\mathcal{F}$ is a besselian paire.
\end{enumerate}
The new proof of theorem \ref{bess-ssi-dualbess} is then complete.
\end{proof}
\begin{proposition}
 Assume that  $\mathcal{F}^{\ast }$\textit{\ is an unconditional
Schauder frame of }$E^{\ast },$ then $\mathcal{F}$  is a
besselian paire of  $E$.
\end{proposition}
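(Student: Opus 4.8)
The plan is to reduce the assertion to the characterization of besselian paires furnished by Theorem \ref{zabr}, together with its reformulation in Remark \ref{rephrased.zabr}. First I would spell out the hypothesis: since the dual paire is $\mathcal{F}^{\ast}=\left(\left(b_{n}^{\ast},J_{E}(a_{n})\right)\right)_{n\in\mathbb{N}^{\ast}}$, saying that $\mathcal{F}^{\ast}$ is an unconditional Schauder frame of $E^{\ast}$ means exactly that, for every $x^{\ast}\in E^{\ast}$, the series
\[
\sum J_{E}(a_{n})(x^{\ast})\,b_{n}^{\ast}=\sum x^{\ast}(a_{n})\,b_{n}^{\ast}
\]
converges unconditionally in $E^{\ast}$ (here I use the identity $J_{E}(a_{n})(x^{\ast})=x^{\ast}(a_{n})$).

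The key step is the elementary fact that an unconditionally convergent series in a Banach space is a fortiori weakly unconditionally convergent. Indeed, if $\sum z_{n}$ converges unconditionally in $E^{\ast}$, then for every $x^{\ast\ast}\in E^{\ast\ast}$ each rearrangement of the scalar series $\sum x^{\ast\ast}(z_{n})$ converges, by continuity of $x^{\ast\ast}$; and a scalar series all of whose rearrangements converge is absolutely convergent, whence $\sum\left\vert x^{\ast\ast}(z_{n})\right\vert<+\infty$. Applying this to $z_{n}=x^{\ast}(a_{n})\,b_{n}^{\ast}$ shows that, for each $x^{\ast}\in E^{\ast}$, the series $\sum x^{\ast}(a_{n})\,b_{n}^{\ast}$ is weakly unconditionally convergent in $E^{\ast}$.

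Finally I would invoke Theorem \ref{zabr} through Remark \ref{rephrased.zabr}. Unwinding the definition of weak unconditional convergence of $\sum x^{\ast}(a_{n})\,b_{n}^{\ast}$ against an arbitrary $x^{\ast\ast}\in E^{\ast\ast}$ gives
\[
\sum\left\vert x^{\ast\ast}(b_{n}^{\ast})\right\vert\left\vert x^{\ast}(a_{n})\right\vert=\sum\left\vert x^{\ast\ast}\left(x^{\ast}(a_{n})\,b_{n}^{\ast}\right)\right\vert<+\infty
\]
for all $x^{\ast}\in E^{\ast}$ and $x^{\ast\ast}\in E^{\ast\ast}$, which is precisely condition \eqref{w}. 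Hence $\mathcal{F}$ is a besselian paire of $E$. I expect the only real obstacle to be the justification of the elementary lemma (unconditional $\Rightarrow$ weak unconditional convergence); everything else is bookkeeping with the dual paire and a direct appeal to the already-proved Theorem \ref{zabr}, the single point requiring care being the correct identification of $J_{E}(a_{n})(x^{\ast})$ with $x^{\ast}(a_{n})$ and of the test functionals on $E^{\ast}$ with the elements $x^{\ast\ast}\in E^{\ast\ast}$ appearing in \eqref{w}.
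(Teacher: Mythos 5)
Your proof is correct and takes essentially the same route as the paper's: both pass from the unconditional convergence of $\sum x^{\ast}(a_{n})b_{n}^{\ast}$ to its weak unconditional convergence and then invoke Theorem \ref{zabr} via Remark \ref{rephrased.zabr}. The only difference is that you spell out the elementary implication (unconditional $\Rightarrow$ weakly unconditional) that the paper leaves implicit.
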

\begin{proof}
The assumption on $\mathcal{F}^{*}$ entails that the series $\sum J_{E}\left(
a_{n}\right) (x^{\ast })b_{n}^{\ast }=\sum x^{\ast }\left( a_{n}\right)
b_{n}^{\ast }$ is weakly unconditionally convergent for each $x^{\ast }\in
E^{\ast }.$ Consequently, thanks to the remark \ref{rephrased.zabr}, $\mathcal{F}$ is a
besselian paire of $E$.
\end{proof}
\begin{lemma}\cite[page 85, example 1.10.2]{meg}.
The  mapping $ \Phi _{p}:  L_{p^{\ast }}\left( [0,1]\right)  \rightarrow  \left(
L_{p}\left( [0,1]\right) \right) ^{\ast }$
defined for each $f\in L_{p^{\ast }}\left( [0,1]\right) $ and $g\in
L_{p}\left( [0,1]\right) $ by the formula 
$\Phi_{p}(f)(g)=\int_{0}^{1}fgdx$
is an isometric isomorphism from $L_{p^{\ast }}\left(
[0,1]\right) $ onto $\left( L_{p}\left( [0,1]\right) \right) ^{\ast }$.
\end{lemma}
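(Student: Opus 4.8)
My plan is to prove this as the classical $L_{p}$--$L_{p^{\ast}}$ duality on the finite measure space $\left([0,1],dx\right)$, where $1<p<+\infty$ forces $1<p^{\ast}<+\infty$. I would establish, in order, that $\Phi_{p}$ is well-defined and norm-decreasing, that it is in fact norm-preserving (hence injective), and finally that it is onto; the surjectivity is the only genuinely delicate step. First I would check well-definedness and linearity: linearity in $f$ is immediate from the linearity of the integral, and for boundedness H\"{o}lder's inequality gives, for each $f\in L_{p^{\ast}}\left([0,1]\right)$ and $g\in L_{p}\left([0,1]\right)$,
\begin{equation*}
\left\vert \Phi_{p}(f)(g)\right\vert =\left\vert \int_{0}^{1}fg\,dx\right\vert \leq \int_{0}^{1}\left\vert fg\right\vert \,dx\leq \left\Vert f\right\Vert _{L_{p^{\ast}}}\left\Vert g\right\Vert _{L_{p}},
\end{equation*}
so that $\Phi_{p}(f)\in \left(L_{p}\left([0,1]\right)\right)^{\ast}$ with $\left\Vert \Phi_{p}(f)\right\Vert \leq \left\Vert f\right\Vert _{L_{p^{\ast}}}$.

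Next I would obtain the reverse inequality by testing against a well-chosen function. Given $f\neq 0$ in $L_{p^{\ast}}\left([0,1]\right)$, set $g:=\left\vert f\right\vert ^{p^{\ast}-1}\operatorname{sgn}\left(\overline{f}\right)$, where $\operatorname{sgn}\left(\overline{f}\right):=\overline{f}/\left\vert f\right\vert$ on $\{f\neq 0\}$ and $0$ elsewhere. Since $\left(p^{\ast}-1\right)p=p^{\ast}$, one has $\left\vert g\right\vert ^{p}=\left\vert f\right\vert ^{p^{\ast}}$, hence $g\in L_{p}\left([0,1]\right)$ with $\left\Vert g\right\Vert _{L_{p}}=\left\Vert f\right\Vert _{L_{p^{\ast}}}^{p^{\ast}/p}$, while $fg=\left\vert f\right\vert ^{p^{\ast}}$ gives $\Phi_{p}(f)(g)=\left\Vert f\right\Vert _{L_{p^{\ast}}}^{p^{\ast}}$. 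Dividing and using $1-\tfrac{1}{p}=\tfrac{1}{p^{\ast}}$ yields $\left\Vert \Phi_{p}(f)\right\Vert \geq \left\Vert f\right\Vert _{L_{p^{\ast}}}$, so combined with the previous step $\Phi_{p}$ is an isometry and in particular injective.

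The hard part is surjectivity. Given $\varphi \in \left(L_{p}\left([0,1]\right)\right)^{\ast}$, I would define the set function $\nu(A):=\varphi\left(\chi_{A}\right)$ on Lebesgue measurable $A\subseteq [0,1]$; this is meaningful because $\chi_{A}\in L_{p}\left([0,1]\right)$. Using $\left\Vert \chi_{A}\right\Vert _{L_{p}}=\left\vert A\right\vert^{1/p}$ (with $\left\vert A\right\vert$ the Lebesgue measure of $A$) together with the continuity of $\varphi$, one checks that $\nu$ is countably additive and absolutely continuous with respect to Lebesgue measure. The Radon--Nikodym theorem then furnishes $f\in L_{1}\left([0,1]\right)$ with $\nu(A)=\int_{A}f\,dx$ for every $A$, whence $\varphi(s)=\int_{0}^{1}sf\,dx$ first for simple $s$ and then, by a dominated-convergence approximation, for every bounded measurable function. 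To place $f$ in $L_{p^{\ast}}$ I would truncate: on $E_{n}:=\{\left\vert f\right\vert \leq n\}$ the bounded function $g_{n}:=\chi_{E_{n}}\left\vert f\right\vert ^{p^{\ast}-1}\operatorname{sgn}\left(\overline{f}\right)$ satisfies $\left\Vert g_{n}\right\Vert _{L_{p}}^{p}=\int_{E_{n}}\left\vert f\right\vert ^{p^{\ast}}$ and $g_{n}f=\chi_{E_{n}}\left\vert f\right\vert ^{p^{\ast}}$, so
\begin{equation*}
\int_{E_{n}}\left\vert f\right\vert ^{p^{\ast}}=\varphi\left(g_{n}\right)\leq \left\Vert \varphi\right\Vert \left\Vert g_{n}\right\Vert _{L_{p}}=\left\Vert \varphi\right\Vert \left(\int_{E_{n}}\left\vert f\right\vert ^{p^{\ast}}\right)^{1/p},
\end{equation*}
giving $\left(\int_{E_{n}}\left\vert f\right\vert ^{p^{\ast}}\right)^{1/p^{\ast}}\leq \left\Vert \varphi\right\Vert$; letting $n\to +\infty$ and invoking monotone convergence produces $\left\Vert f\right\Vert _{L_{p^{\ast}}}\leq \left\Vert \varphi\right\Vert <+\infty$, so $f\in L_{p^{\ast}}\left([0,1]\right)$. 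Finally, since the simple functions are dense in $L_{p}\left([0,1]\right)$ and both $\varphi$ and $\Phi_{p}(f)$ are continuous and agree on them, I would conclude $\varphi=\Phi_{p}(f)$, which exhibits $\Phi_{p}$ as onto and finishes the argument. The single real obstacle here is manufacturing the representing density $f$ and controlling its $L_{p^{\ast}}$ norm; the rest is H\"{o}lder's inequality and a direct test-function computation.
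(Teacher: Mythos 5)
Your argument is correct and complete: H\"{o}lder's inequality gives boundedness, the test function $\left\vert f\right\vert^{p^{\ast}-1}\operatorname{sgn}\left(\overline{f}\right)$ gives the reverse inequality (correctly adapted to the bilinear pairing $\int_{0}^{1}fg\,dx$), and the Radon--Nikodym plus truncation argument yields surjectivity with the bound $\left\Vert f\right\Vert_{L_{p^{\ast}}}\leq\left\Vert\varphi\right\Vert$. The paper offers no proof of this lemma --- it is simply quoted from Megginson --- and what you have written is precisely the standard argument of that reference, so there is nothing to reconcile.
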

\begin{definition}\cite[pages: 359-361]{meg}.
  We consider the Haar system $(h_{n})_{n\in \mathbb{N}^{\ast }}$ of $
L_{p}\left( [0,1]\right) $ defined as follows. Let $h_{1}$ be $1$ on $\left[ 0,1\right[ $ and $0$ at $1$.
When $n\geq 2$, we define $h_{n}$ by letting $m$ be the positive integer
such that $2^{m-1}<n\leq 2^{m}$, then let 
\begin{equation*}
h_{n}(t)=\left \{ 
\begin{array}{c}
1\;\;\text{ if } t\in \left[ \frac{2n-2}{2^{m}}-1,\frac{2n-1}{2^{m}}-1\right[ \\ 
-1\text{ if }t\in \left[ \frac{2n-1}{2^{m}}-1,\frac{2n}{2^{m}}-1\right[ \\ 
0\text{ else}%
\end{array}%
\right.
\end{equation*}
We denote by $\mathfrak{h}\left( p\right) $ the paire 
$\left (\left (h_{n}/\Vert h_{n}\Vert_{2},
\Phi_{p} \left (h_{n}/\Vert h_{n}\Vert_{2}
  \right )\right )\right )_{n\in\mathbb{N}^{*}}$ of $L_{p}\left( \left[ 0,1\right] \right) $ and by $\mathfrak{h}%
\left( p\right) ^{\ast }$ the dual paire of the paire $\mathfrak{h}\left(
p\right) ,$ that is $\mathfrak{h}\left( p\right) ^{\ast }:=\left( \left(
\Phi _{p}\left( h_{n}/\Vert h_{n}\Vert_{2}\right) ,J_{L_{p}\left( \left[ 0,1\right] \right)
}\left( h_{n}/\Vert h_{n}\Vert_{2}\right) ,\right) \right) _{n\in \mathbb{N}^{*}}.$
\end{definition}
\begin{example}
\textit{The paire} $\mathfrak{h}\left( p\right) $ \textit{is a besselian
Schauder frame of }$L_{p}\left( \left[ 0,1\right] \right) .$
\end{example}
\begin{proof}
Since $\left( \Phi _{p}\left( h_{n}/\Vert h_{n}\Vert_{2}\right) \right) _{n\in \mathbb{N}^{\ast }}$
 is a Schauder basis of $\left( L_{p}\left( \left[ 0,1\right]
\right) \right) ^{\ast }$ which is isometrically isomorphic to $L_{p^{\ast
}}\left( \left[ 0,1\right] \right) $ \cite{pal}, \cite{marc}, it
follows that the paire   $\mathfrak{h}\left( p\right) ^{\ast
}$ is an
unconditionally Schauder frame of $\left( L_{p}(0,1)\right) ^{\ast }$. Hence
according to the proposition \ref{wsc.shrin.bound} as rephrased in the remark \ref{rephrased.zabr}, the paire $%
\mathfrak{h}\left( p\right) $ is a besselian Schauder frame of $L_{p}\left( %
\left[ 0,1\right] \right) .$
\end{proof}
\begin{definition}
Let $ p,q\in]1,+\infty [ $. The amalgam space $ (L_{p},l^{q}) $ of $ L_{p} $ 
and  $ l^{q}$ is the set of the locally $L_{p}$-integrable functions on
$\mathbb{R}$ such that:
$\sum_{m=-\infty}^{+\infty} 
\left\Vert  f_{|[m,m+1]}\right\Vert^{q}_{L_{p}([m,m+1])}<+\infty$.
\end{definition}
\begin{proposition}
The amalgam space $(L_{p},l^{q})$ is a Banach space when endowd with the norm defined by:
$\left\|f \right\|_{p,q}=\left( \sum_{m=-\infty}^{+\infty} 
\left\Vert  f_{|[m,m+1]}\right\Vert^{q}_{L_{p}([m,m+1])}\right )^{1/q}$.
It is well-known that the dual of $(L_{p},l^{q}) $ is isometrically 
isomorphic to the Banach space $(L_{p^{*}},l^{q^{*}}) $. More precisely, the following
mapping :
\begin{equation*}
\begin{array}{cccc}
\Phi_{p,q} : & (L_{p^{*}},l^{q^{*}}) & \rightarrow & (L_{p},l^{q})^{*} \\ 
& f & \mapsto & \Phi_{p,q} (f)
\end{array}
\end{equation*}
defined for each  $g\in (L_{p},l^{q})$ by the formula 
$\Phi_{p,q}(f)(g)=\sum_{m=-\infty}^{+\infty}
\int_{m}^{m+1}   f(x)g(x) dx $
is an isometric isomorphism from $(L_{p^{*}},l^{q^{*}})$
onto $ (L_{p},l^{q})^{*}$. 
\end{proposition}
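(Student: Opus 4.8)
The plan is to realize the amalgam space $(L_{p},l^{q})$ as a vector-valued $l^{q}$-sequence space built over the single block $L_{p}([0,1])$, and then to deduce both completeness and the duality statement from the corresponding facts about such sequence spaces, threading the isometric isomorphism $\Phi_{p}$ of the preceding Lemma to identify $(L_{p}([0,1]))^{\ast}$ with $L_{p^{\ast}}([0,1])$.

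First I would check that $\left\Vert\cdot\right\Vert_{p,q}$ is a norm: homogeneity is immediate, and for the triangle inequality one applies, for each $m\in\mathbb{Z}$, the triangle inequality in $L_{p}([m,m+1])$ to get $\left\Vert (f+g)_{|[m,m+1]}\right\Vert_{L_{p}}\leq\left\Vert f_{|[m,m+1]}\right\Vert_{L_{p}}+\left\Vert g_{|[m,m+1]}\right\Vert_{L_{p}}$, then Minkowski's inequality in $l^{q}(\mathbb{Z})$ applied to the two nonnegative sequences $\big(\left\Vert f_{|[m,m+1]}\right\Vert_{L_{p}}\big)_{m}$ and $\big(\left\Vert g_{|[m,m+1]}\right\Vert_{L_{p}}\big)_{m}$. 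For completeness I would introduce the linear map $\Theta$ sending $f\in(L_{p},l^{q})$ to the sequence $(\tau_{m}f)_{m\in\mathbb{Z}}$, where $\tau_{m}f\in L_{p}([0,1])$ is the function $t\mapsto f(t+m)$ (the restriction of $f$ to $[m,m+1]$ transported to $[0,1]$). By construction $\Theta$ is a linear bijection onto the sequence space $l^{q}(\mathbb{Z},L_{p}([0,1]))$ with $\left\Vert\Theta f\right\Vert=\left\Vert f\right\Vert_{p,q}$, hence an isometric isomorphism; since $L_{p}([0,1])$ is complete, its $l^{q}$-sum $l^{q}(\mathbb{Z},L_{p}([0,1]))$ is a Banach space, and therefore so is $(L_{p},l^{q})$.

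The duality rests on the following fact, which I would prove directly: for $1<q<\infty$ and any Banach space $Y$, the map $(\psi_{m})_{m}\mapsto\big((y_{m})_{m}\mapsto\sum_{m}\psi_{m}(y_{m})\big)$ is an isometric isomorphism from $l^{q^{\ast}}(\mathbb{Z},Y^{\ast})$ onto $(l^{q}(\mathbb{Z},Y))^{\ast}$. One direction is Hölder's inequality. For the other, given $\psi\in(l^{q}(\mathbb{Z},Y))^{\ast}$, I set $\psi_{m}(y):=\psi(y\,e_{m})$, where $y\,e_{m}$ is the sequence equal to $y$ at index $m$ and $0$ elsewhere, so that $\left\Vert\psi_{m}\right\Vert_{Y^{\ast}}\leq\left\Vert\psi\right\Vert$; then, for a finite set $F$ and elements $y_{m}$ of norm $1$ nearly norming each $\psi_{m}$, I test $\psi$ against $\sum_{m\in F}\left\Vert\psi_{m}\right\Vert^{q^{\ast}-1}y_{m}e_{m}$ and use the identity $(q^{\ast}-1)q=q^{\ast}$ to extract $\big(\sum_{m\in F}\left\Vert\psi_{m}\right\Vert^{q^{\ast}}\big)^{1/q^{\ast}}\leq\left\Vert\psi\right\Vert$; taking the supremum over $F$ gives $(\psi_{m})\in l^{q^{\ast}}(\mathbb{Z},Y^{\ast})$ with equal norm, while density of the finitely supported sequences shows that $\psi$ acts exactly by the stated pairing. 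Applying this with $Y=L_{p}([0,1])$, identifying $Y^{\ast}$ with $L_{p^{\ast}}([0,1])$ through $\Phi_{p}$, and using the analogue of $\Theta$ for the exponents $p^{\ast},q^{\ast}$, I obtain the chain of isometric isomorphisms $(L_{p},l^{q})^{\ast}\cong(l^{q}(\mathbb{Z},L_{p}([0,1])))^{\ast}\cong l^{q^{\ast}}(\mathbb{Z},L_{p^{\ast}}([0,1]))\cong(L_{p^{\ast}},l^{q^{\ast}})$.

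Finally I would unwind this chain to verify that the composite functional attached to $f\in(L_{p^{\ast}},l^{q^{\ast}})$ acts on $g\in(L_{p},l^{q})$ by $\sum_{m}\Phi_{p}(\tau_{m}f)(\tau_{m}g)=\sum_{m}\int_{0}^{1}(\tau_{m}f)(\tau_{m}g)\,dt=\sum_{m=-\infty}^{+\infty}\int_{m}^{m+1}f(x)g(x)\,dx$, which is exactly $\Phi_{p,q}(f)(g)$; since every map in the chain is an isometry, $\Phi_{p,q}$ is an isometric isomorphism onto the dual. The main obstacle is the sequence-space duality step: the delicate point is proving that $(\psi_{m})$ lies in $l^{q^{\ast}}(\mathbb{Z},Y^{\ast})$ with norm exactly $\left\Vert\psi\right\Vert$, which requires the careful finite-subset test-sequence argument above. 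Because the index set $\mathbb{Z}$ is discrete, no Radon--Nikodym hypothesis on $Y^{\ast}$ is needed, in contrast with the duality of the continuous Bochner spaces $L^{q}(\mu,Y)$.
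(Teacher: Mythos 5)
The paper offers no proof of this proposition at all: it is asserted as ``well-known'' and the text moves straight on to the next notation block, so there is nothing internal to compare your argument against. Your proof is correct and is the standard way to establish the result. The reduction of $(L_{p},l^{q})$ to the vector-valued sequence space $l^{q}(\mathbb{Z},L_{p}([0,1]))$ via the translation operators $\tau_{m}$ is an exact isometry (the endpoint overlaps of the intervals $[m,m+1]$ are null sets, so nothing is lost), and completeness then follows from completeness of $l^{q}$-sums of Banach spaces. The key lemma, $\bigl(l^{q}(\mathbb{Z},Y)\bigr)^{\ast}\cong l^{q^{\ast}}(\mathbb{Z},Y^{\ast})$ isometrically for $1<q<\infty$, is correctly stated and your finite-subset test-sequence argument with the exponent identity $(q^{\ast}-1)q=q^{\ast}$ is the right computation; you are also right that the discreteness of $\mathbb{Z}$ is what spares you any Radon--Nikodym hypothesis, which is worth saying since the continuous analogue fails in general. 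Two small points to make explicit if you write this out in full: for $\mathbb{K}=\mathbb{C}$ you should multiply each nearly norming vector $y_{m}$ by a unimodular scalar so that the terms $\psi_{m}(y_{m})$ are nonnegative before summing; and you should record the trivial verification that $\left\Vert f\right\Vert_{p,q}=0$ forces $f=0$ almost everywhere. The final unwinding, $\sum_{m}\Phi_{p}(\tau_{m}f)(\tau_{m}g)=\sum_{m}\int_{m}^{m+1}fg\,dx$, correctly identifies the abstract dual isomorphism with the concrete map $\Phi_{p,q}$ of the statement, so both surjectivity and the isometry property transfer. In short, you have supplied a complete and correct proof of a statement the paper leaves unproved.
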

  \begin{notation}
  \begin{enumerate}
  \item
For $a\in \mathbb{R}$, the operation $T_{a}$, called translation by $a$,
 is defined by
$T_{a}:  (L_{p},l^{q}) \longrightarrow (L_{p},l^{q}),\;\;T_{a}f(x):=f(x-a), x\in\mathbb{R} $.
\item
  For $m\in \mathbb{Z}$, the function $\chi_{m}: \mathbb{R}\longrightarrow \mathbb{R}$ 
is defined by:
 \[\left\{
\begin{array}{l}
\chi_{m}(x)=1\;\;if\;\; x\in[m,m+1]  \\
\chi_{m}(x)=0\;\;if\;\; x\notin[m,m+1]    \\
\end{array}
\right.\]    
\item
For each function $f:[0,1]\longrightarrow \mathbb{R}$. We define
the function  $\tilde{f}:\mathbb{R}\longrightarrow \mathbb{R}$ by:
 \[\tilde{f}(x)=\left\{
\begin{array}{l}
f(x)\;\;if\;\; x\in [0,1]  \\
\;\;0\;\;\;\;if\;\; x\notin [0,1]   \\
\end{array}
\right.\]  
  \end{enumerate}
  \end{notation}

\begin{example} 
Let $1<p,q<+\infty$ be given  and $\mathcal{F}=\left( \left( a_{n},\Phi_{p}(b_{n})\right) \right) _{n\in \mathbb{N}^{*}}$
 is an unconditional  besselian Schauder frame of $L_{p}[0,1]$.  Then
$ \left (\left (T_{m}\tilde{a}_{n},\Phi_{p,q}\left (T_{m}\tilde{b}_{n}\right )\right )\right )_{(m,n)\in\mathbb{Z}\times \mathbb{N}^{*}}$
  is an unconditional besselian Schauder frame of $  (L_{p},l^{q}) $.
\end{example} 
\textbf{Proof}:

Let $m\in \mathbb{Z}$ and $ f\in(L_{p},l^{q})$. We have:
\begin{align*}
T_{-m}\left(\chi_{m}f\right )&=\overset{+\infty }{\underset{n=1}{\sum }}\left(\int_{0}^{1}\tilde{b}_{n}T_{-m}\left(\chi_{m}f\right )\right )\tilde{a}_{n} \\
 				   	 &=\overset{+\infty }{\underset{n=1}{\sum }}\left(\int_{m}^{m+1}\left (T_{m}\tilde{b}_{n}\right )\left(\chi_{m}f\right )\right )\tilde{a}_{n}\\
 				   	 &=\overset{+\infty }{\underset{n=1}{\sum }}\left(\int_{\mathbb{R}}\left (T_{m}\tilde{b}_{n}\right )f\right )\tilde{a}_{n}\\
\end{align*} 
Then 
\[\chi_{m}f=\overset{+\infty }{\underset{n=1}{\sum }}\Phi_{p,q}\left (T_{m}\tilde{b}_{n}\right )(f)T_{m}\tilde{a}_{n} \]

For each $N\in \mathbb{N}$ and $M\in \mathbb{N}$ the function
$\overset{M}{\underset{m=-N}{\sum }}\chi_{m}f  $ is an element
of $(L_{p},l^{q})$  and we have:
\begin{align*}
 \left \Vert f-\overset{M}{\underset{m=-N}{\sum }}\chi_{m}f\right\Vert^{q}_{(L_{p},l^{q})} 
 &=\overset{+\infty }{\underset{k=-\infty}{\sum }}
 \left (\int_{k}^{k+1}\left \vert f-\overset{N}{\underset{m=-N}{\sum }}\chi_{m}f\right\vert^{p}dx\right )^{q/p}\\ 			
  &=\overset{-N-1}{\underset{k= -\infty}{\sum }}
 \left (\int_{k}^{k+1}\left \vert f\right\vert^{p}dx\right )^{q/p}
 +\overset{+\infty}{\underset{k=M+1 }{\sum }}
 \left (\int_{k}^{k+1}\left \vert f\right\vert^{p}dx\right )^{q/p}
 				   	 \end{align*}
 But we have:
$$\underset{min(M,N)\rightarrow +\infty }{\lim}
\overset{-N-1}{\underset{k= -\infty}{\sum }}
 \left (\int_{k}^{k+1}\left \vert f\right\vert^{p}dx\right )^{q/p}
 +\overset{+\infty}{\underset{k=M+1 }{\sum }}
 \left (\int_{k}^{k+1}\left \vert f\right\vert^{p}dx\right )^{q/p}=0$$
It follows that:
\begin{align*}
 				 	f&=\overset{+\infty }{\underset{m=-\infty}{\sum }}\chi_{m}f \\
 				   	 	 &=\overset{+\infty }{\underset{m=-\infty}{\sum }}
 				     	 \left (\overset{\infty}{\underset{n=1}{\sum}}\Phi_{p,q}\left (T_{m}\tilde{b}_{n}\right )(f )T_{m}\tilde{a}_{n} \right )
\end{align*} 

Let $ g\in(L_{p^{*}},l^{q^{*}})$. Then by the 
H\"older inequality we obtain:
\begin{align*}
& \overset{+\infty }{\underset{m=-\infty}{\sum }}
\overset{+\infty }{\underset{n=1}{\sum }}
  \left \vert \Phi_{p,q}\left (T_{m}\tilde{b}_{n}\right )\left( f\right)\right  \vert  \left \vert  \Phi_{p,q}(g)\left( T_{m}\tilde{a}_{n}\right) \right \vert \\
&=\overset{+\infty }{\underset{m=-\infty}{\sum }}
\overset{+\infty }{\underset{n=1}{\sum }}
  \left \vert \int_{\mathbb{R}} T_{m}\left (\tilde{b}_{n}\right )f \right  \vert  
\left \vert \int_{\mathbb{R}} T_{m}\left (\tilde{a}_{n}\right )g \right  \vert\\ 
&=\overset{+\infty }{\underset{m=-\infty}{\sum }}
\overset{+\infty }{\underset{n=1}{\sum }}
  \left \vert \int_{m}^{m+1}T_{m}\left (\tilde{b}_{n}\right )f \right  \vert  
\left \vert \int_{m}^{m+1}T_{m}\left (\tilde{a}_{n}\right )g \right  \vert\\ 
&=\overset{+\infty }{\underset{m=-\infty}{\sum }}
\overset{+\infty }{\underset{n=1}{\sum }}
  \left \vert \int_{0}^{1}b_{n}T_{-m}\left (\chi_{m}f\right ) \right  \vert  
\left \vert \int_{0}^{1}a_{n}T_{-m}\left (\chi_{m}g\right ) \right  \vert\\ 
 				   	 &\leq  \mathcal{L}_{\mathcal{F}}
 				   	 \overset{+\infty }{\underset{m=-\infty}{\sum }}
 				   	 \left (\int_{0}^{1}\left \vert T_{-m}\left (\chi_{m}f\right ) \right  \vert^{p}\right )^{1/p}
 				   	 \left (\int_{0}^{1}\left \vert T_{-m}\left (\chi_{m}g\right ) \right  \vert^{p^{*}}\right )^{1/p^{*}}\\
  					&\leq  \mathcal{L}_{\mathcal{F}}
 				   	 \overset{+\infty }{\underset{m=-\infty}{\sum }}
 				   	 \left (\int_{m}^{m+1}\left \vert f\right  \vert^{p}\right )^{1/p}
 				   	 \left (\int_{m}^{m+1}\left \vert g\right  \vert^{p^{*}}\right )^{1/p^{*}}\\
&=\mathcal{L}_{\mathcal{F}} \left\|f \right\|_{\left (L_{p},l^{q}\right )} \left\|\Phi_{p,q}(g) \right\|_{\left (L_{p},l^{q}\right )^{*}}
\end{align*}
Finally, since $  (L_{p},l^{q}) $ is reflexive then it is also weakly
sequentially complete. It follows, according to the proposition \eqref{bess.imp.ucv} 
 that $ \left (\left (T_{m}\tilde{a}_{n},\Phi_{p,q}\left (T_{m}\tilde{b}_{n}\right )\right )\right )_{(m,n)\in\mathbb{Z}\times \mathbb{N}^{*}}$
  is an unconditional besselian Schauder frame of $  (L_{p},l^{q}) $.\\
  \qed

The following result is a generalisation to the well known James's theorem
\cite{jam} which characterizes reflexive Banach spaces.
\begin{theorem}
\label{James}
\textit{Assume that} $\mathcal{F}$\ is a besselian Schauder frame of 
$E$. Then $E$ is reflexive if and only if $\mathcal{F}$ 
 is shrinking and boundedly complete.
\end{theorem}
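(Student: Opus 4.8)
The plan is to establish the two implications separately. The forward direction reduces immediately to the earlier propositions, while the reverse direction is the James-type construction adapted to the Schauder frame setting.

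For the forward implication I would first recall the standard fact that every reflexive Banach space is weakly sequentially complete. Since $E$ reflexive forces $E^{\ast}$ to be reflexive as well, both $E$ and $E^{\ast}$ are weakly sequentially complete. As $\mathcal{F}$ is a besselian Schauder frame, Proposition \ref{wsc.shrin.bound} applies directly: its part (1), using that $E^{\ast}$ is weakly sequentially complete, yields that $\mathcal{F}$ is shrinking; its part (2), using that $E$ is weakly sequentially complete, yields that $\mathcal{F}$ is boundedly complete. This direction therefore requires no new computation beyond citing these results.

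For the reverse implication, assume $\mathcal{F}$ is shrinking and boundedly complete; I would prove $E$ reflexive by showing that the canonical embedding $J_{E}$ is onto. Fix $x^{\ast\ast}\in E^{\ast\ast}$. Bounded completeness guarantees that the series $\sum x^{\ast\ast}(b_{n}^{\ast})a_{n}$ converges in $E$ to some element $x$, and the goal is to verify $J_{E}(x)=x^{\ast\ast}$. On the one hand, for every $x^{\ast}\in E^{\ast}$ the continuity of $x^{\ast}$ gives $J_{E}(x)(x^{\ast})=x^{\ast}(x)=\sum_{n}x^{\ast\ast}(b_{n}^{\ast})x^{\ast}(a_{n})$. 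On the other hand, the shrinking hypothesis together with Proposition \ref{shrin.iff.dualframe} shows that $\mathcal{F}^{\ast}$ is a Schauder frame of $E^{\ast}$, so $x^{\ast}=\sum_{n}x^{\ast}(a_{n})b_{n}^{\ast}$ with convergence in the norm of $E^{\ast}$; applying the continuous functional $x^{\ast\ast}$ then yields $x^{\ast\ast}(x^{\ast})=\sum_{n}x^{\ast}(a_{n})x^{\ast\ast}(b_{n}^{\ast})$. Since the two series consist of identical terms, $J_{E}(x)(x^{\ast})=x^{\ast\ast}(x^{\ast})$ for all $x^{\ast}$, hence $J_{E}(x)=x^{\ast\ast}$ and $E$ is reflexive.

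The step I expect to be the crux is the justification in the reverse direction that $x^{\ast\ast}$ may be passed through the series $\sum x^{\ast}(a_{n})b_{n}^{\ast}$; this rests precisely on the shrinking hypothesis, which---via Proposition \ref{shrin.iff.dualframe}---upgrades the frame relation into genuine norm convergence of $\mathcal{F}^{\ast}$ in $E^{\ast}$, after which continuity of $x^{\ast\ast}$ closes the argument. Everything else amounts to matching the two series term by term and invoking the already-established bounded completeness to produce the preimage $x$.
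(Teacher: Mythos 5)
Your proposal is correct and follows essentially the same route as the paper's proof: the forward direction via weak sequential completeness of $E$ and $E^{\ast}$ together with Proposition \ref{wsc.shrin.bound}, and the reverse direction by producing $x$ from bounded completeness and identifying $J_{E}(x)$ with $x^{\ast\ast}$ through the norm convergence of $\mathcal{F}^{\ast}$ furnished by Proposition \ref{shrin.iff.dualframe}. Your explicit justification for passing $x^{\ast\ast}$ through the series is a slightly more careful rendering of the same computation.
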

\begin{proof}
Assume that $E$ is reflexive. Then $E^{\ast }$ is also reflexive \cite[Corollary 1.11.17 page 104]{meg}.
Consequently, $E$ and $E^{\ast }$ are weakly sequentially complete Banach
spaces. Since $\mathcal{F}$ is a besselian paire of $E$, it follows, thanks
to the theorem \ref{bess-ssi-dualbess}, that $\mathcal{F}^{\ast }$ is a besselian paire of $%
E^{\ast }$. It follows from proposition \ref{wsc.shrin.bound}, that $\mathcal{F}$ is shrinking
and boundedly complete.\newline
Assume now that $\mathcal{F}$ is shrinking and boundedly complete. Let be
given $x^{\ast \ast }\in E^{\ast \ast }$. Since $\mathcal{F}$ is boundedly
complete then the series $\sum x^{\ast \ast }\left( b_{n}^{\ast }\right)
a_{n}$ convergent to an element $x$ in $E$, that is: 
\begin{equation*}
x=\overset{+\infty }{\underset{n=1}{\sum }}x^{\ast \ast }(b_{n}^{\ast })a_{n}
\end{equation*}%
But $\mathcal{F}$ is shrinking. Hence, thanks to the proposition \ref{shrin.iff.dualframe}, $%
\mathcal{F}^{\ast }$ is a Schauder frame of $E^{\ast }$. It follows that : 
\begin{equation*}
y^{\ast }=\overset{+\infty }{\underset{n=1}{\sum }}y^{\ast
}(a_{n})b_{n}^{\ast },\;y^{\ast }\in E^{\ast }
\end{equation*}%
Consequently, we have for each $x^{\ast }\in E^{\ast }$: 
\begin{align*}
J_{E}(x)(x^{\ast })& =x^{*}(x)\\
& = \overset{+\infty }{\underset{n=1}{%
\sum }}x^{\ast }(a_{n})x^{**}\left(b_{n}^{\ast }\right) \\
& =x^{\ast \ast }\left( \overset{+\infty }{\underset{n=1}{%
\sum }}x^{\ast }(a_{n})b_{n}^{\ast }\right) \\
& =x^{\ast \ast }(x^{\ast })
\end{align*}%
It follows that $x^{**}=J_{E}(x)$. Thus $J_{E}$ is surjective.
Consequently $E$ is reflexive.\newline

The proof of the theorem is then complete.
\end{proof}
\begin{theorem}
\textit{Assume that }$\mathcal{F}$\textit{\ is a besselian Schauder frame of 
}$E$\textit{. Then }$E$\textit{\ is reflexive if and only if the spaces }$E$%
\textit{\ and }$E^{\ast }$\textit{\ are both weakly sequentially complete.}
\end{theorem}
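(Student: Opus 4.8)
The plan is to read this statement as an essentially immediate corollary of the two prior results, namely Proposition \ref{wsc.shrin.bound} and the James-type Theorem \ref{James}, and to handle the two implications separately.

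For the direct implication, I would assume that $E$ is reflexive. The plan is to recall that reflexivity passes to the dual, so that $E^{\ast }$ is reflexive as well, and then to invoke the classical fact that every reflexive Banach space is weakly sequentially complete. Concretely, given a sequence $(x_{n})_{n\in \mathbb{N}^{\ast }}$ in $E$ for which $\lim_{n}x^{\ast }(x_{n})$ exists for every $x^{\ast }\in E^{\ast }$, I would note that this sequence is weakly Cauchy and, by the uniform boundedness principle applied to the functionals $J_{E}(x_{n})$, norm-bounded; in a reflexive space it therefore admits a weakly convergent subsequence whose weak limit $x$ satisfies $\lim_{n}x^{\ast }(x_{n})=x^{\ast }(x)$ for all $x^{\ast }$. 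Applying the same reasoning to $E^{\ast }$ gives that $E^{\ast }$ is weakly sequentially complete too. This is exactly the observation already used at the start of the proof of Theorem \ref{James}, so no new work is required here.

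For the converse implication, I would assume that $E$ and $E^{\ast }$ are both weakly sequentially complete and aim to deduce reflexivity. Since $\mathcal{F}$ is a besselian Schauder frame of $E$ and $E^{\ast }$ is weakly sequentially complete, Proposition \ref{wsc.shrin.bound}(1) yields that $\mathcal{F}$ is shrinking. Since $E$ itself is weakly sequentially complete, Proposition \ref{wsc.shrin.bound}(2) yields that $\mathcal{F}$ is boundedly complete. Having established that $\mathcal{F}$ is simultaneously shrinking and boundedly complete, I would close the argument by applying the equivalence of Theorem \ref{James}, which asserts that a besselian Schauder frame of $E$ is shrinking and boundedly complete if and only if $E$ is reflexive; this forces $E$ to be reflexive.

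I do not expect a genuine obstacle, since all the substantial content is packaged in the two cited results. The only point deserving care is the elementary but unstated implication ``reflexive $\Rightarrow $ weakly sequentially complete'' used in the direct part; I would either justify it in one line via the weakly-Cauchy/weak-compactness argument above or simply cite it from the standard references already listed (for instance \cite{meg}). Everything else is a direct chaining of Proposition \ref{wsc.shrin.bound} and Theorem \ref{James}.
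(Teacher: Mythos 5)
Your proposal is correct and follows exactly the paper's own argument: reflexivity of $E$ and $E^{\ast }$ gives weak sequential completeness for the forward direction, and the converse chains Proposition \ref{wsc.shrin.bound} (shrinking plus boundedly complete) into Theorem \ref{James}. The extra justification you supply for ``reflexive $\Rightarrow$ weakly sequentially complete'' is sound but not needed beyond a citation.
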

\begin{proof}
Assume that $E$ is reflexive. Then $E$ and $E^{\ast }$ are both reflexive.
Hence $E$ and $E^{\ast }$ are both weakly sequentially complete.\newline
Assume now that $E$ and $E^{\ast }$ are both weakly sequentially complete.
Since $\mathcal{F}$ is a besselian Schauder frame of $E$, it follows,
according to the proposition \ref{wsc.shrin.bound}, that $\mathcal{F}$ is a besselian Schauder
frame of $E$ which is shrinking and boundedly complete. Consequently the
theorem \ref{James} entails that the Banach space $E$ is reflexive.

The proof of the theorem is then complete.
\end{proof}
\begin{definition}\cite[page 220,
definition 2.5.25]{meg} \cite[page 37, definition 2.3.4]{kal}  \cite{sch}.
$X$ is said to be a Schur space  if it satisfies the following condition : Whenever $(x_{n})_{n\in \mathbb{N}^{\ast
}}$ a sequence of $X$ and \ $x\in X$ such that $\underset{n\rightarrow
+\infty }{\lim }x^{\ast }(x_{n})=x^{*}(x)$   for every $x^{\ast }\in X^{\ast },$
then $\underset{n\rightarrow +\infty }{\lim }x_{n}=x.$
\end{definition}
\begin{corollary}
 Assume that  $E$  is an infinitely dimensional Schur space
which has a besselian Schauder frame. Then the dual space  $E^{\ast }$ 
 is not weakly sequentially complete.
\end{corollary}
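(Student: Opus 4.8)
The plan is to argue by contradiction using the reflexivity characterization just established, together with the defining property of a Schur space. Suppose, for contradiction, that $E^{\ast}$ is weakly sequentially complete. Since $E$ is a Schur space, $E$ is in particular weakly sequentially complete: indeed, whenever $(x_{n})_{n}$ is a sequence in $E$ for which $\lim_{n}x^{\ast}(x_{n})$ exists for every $x^{\ast}\in E^{\ast}$, the Schur property does not immediately give convergence, so the cleaner route is to invoke the standard fact that every Schur space is weakly sequentially complete. I would state this as a known property (it follows because a weakly Cauchy sequence in a Schur space is norm Cauchy, hence norm convergent to some $x\in E$, and then the weak limit agrees with $x$). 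Thus both $E$ and $E^{\ast}$ would be weakly sequentially complete.

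With both $E$ and $E^{\ast}$ weakly sequentially complete and $\mathcal{F}$ a besselian Schauder frame of $E$ by hypothesis, the immediately preceding theorem applies and forces $E$ to be reflexive. The contradiction will then come from combining reflexivity with the Schur property in infinite dimension: I would recall the classical theorem that an infinite-dimensional Banach space cannot be simultaneously reflexive and Schur. The quickest justification is that a reflexive space has a weakly convergent subsequence for every bounded sequence (Eberlein--\v{S}mulian / the fact that the closed unit ball is weakly sequentially compact in a reflexive space), so taking any normalized basic sequence one extracts a weakly convergent (indeed weakly null after passing to differences) subsequence; the Schur property would upgrade this to norm convergence, contradicting that the terms stay bounded away from $0$ in norm. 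Hence no infinite-dimensional space is both reflexive and Schur.

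Putting these together yields the result: the assumption that $E^{\ast}$ is weakly sequentially complete leads, via the Schur hypothesis and the preceding theorem, to $E$ being an infinite-dimensional reflexive Schur space, which is impossible. Therefore $E^{\ast}$ is not weakly sequentially complete. The main obstacle, and the only place requiring genuine care, is justifying the two auxiliary facts cleanly without circularity: first that a Schur space is weakly sequentially complete (so that the hypothesis of the preceding theorem is met), and second that an infinite-dimensional reflexive space fails the Schur property. Both are standard and can be cited from the Banach space references already listed in the paper (for instance Megginson or Albiac--Kalton), so the proof itself is short once these are in place.

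\begin{proof}
Suppose, for contradiction, that $E^{\ast}$ is weakly sequentially complete. Recall that every Schur space is weakly sequentially complete: if $(x_{n})_{n\in\mathbb{N}^{\ast}}$ is a sequence of $E$ such that $\lim_{n\rightarrow+\infty}x^{\ast}(x_{n})$ exists for every $x^{\ast}\in E^{\ast}$, then $(x_{n})_{n}$ is weakly Cauchy, hence norm Cauchy in the Schur space $E$, so it converges in norm to some $x\in E$, and consequently $\lim_{n}x^{\ast}(x_{n})=x^{\ast}(x)$ for every $x^{\ast}\in E^{\ast}$. Thus $E$ is itself weakly sequentially complete, and so both $E$ and $E^{\ast}$ are weakly sequentially complete.

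Since $\mathcal{F}$ is a besselian Schauder frame of $E$ and both $E$ and $E^{\ast}$ are weakly sequentially complete, the preceding theorem entails that $E$ is reflexive. But an infinite-dimensional Banach space cannot be simultaneously reflexive and a Schur space: in a reflexive space the closed unit ball is weakly sequentially compact, so any normalized sequence admits a weakly convergent subsequence, and the Schur property would then force this subsequence to converge in norm; passing to consecutive differences yields a normalized sequence converging in norm to $0$, which is absurd. This contradicts the assumption that $E$ is an infinite-dimensional Schur space.

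Therefore $E^{\ast}$ is not weakly sequentially complete.
\end{proof}
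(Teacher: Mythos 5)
Your proof is correct and follows essentially the same route as the paper's: both hinge on the immediately preceding theorem together with the fact that an infinite-dimensional Schur space cannot be reflexive (which the paper simply cites from Albiac--Kalton, Corollary 2.3.8, and which you rederive), your argument by contradiction being just the contrapositive of the paper's direct one, with the useful extra step of making explicit that a Schur space is weakly sequentially complete. The only quibble is in your sketch of the auxiliary fact: consecutive differences of a normalized weakly convergent sequence need not stay bounded away from $0$ in norm unless the sequence is basic (or norm-separated via Riesz's lemma), but since you also offer to cite this standard fact from the references already in the paper, this does not affect the validity of the proof.
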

\begin{proof}
Since $E$ is an infinitely dimensional Schur space, then $E$ is
not reflexive \cite[corollary 2.3.8 page 37]{kal}. \ But it is also
assumed that $E$ has a besselian Schauder frame. Consequently $E^{\ast }$ is
not weakly sequentially complete.
\end{proof}
\begin{corollary}
 The Banach space  $l^{\infty }\left( \mathbb{K}\right) $  is
not weakly sequentially complete. 
\end{corollary}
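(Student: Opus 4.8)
The plan is to apply the preceding corollary to the space $l^{1}(\mathbb{K})$ and then transport the conclusion to $l^{\infty }(\mathbb{K})$ through the isometric isomorphism $\Psi $. First I would record that $l^{1}(\mathbb{K})$ is an infinite dimensional Banach space possessing the Schur property: every sequence of $l^{1}(\mathbb{K})$ for which $\lim_{n}x^{\ast }(x_{n})=x^{\ast }(x)$ holds for every $x^{\ast }\in \left( l^{1}(\mathbb{K})\right) ^{\ast }$ is in fact norm convergent to $x$. In the terminology of the Definition preceding this corollary, this says exactly that $l^{1}(\mathbb{K})$ is a Schur space; this is the classical theorem of Schur, which I would cite from \cite{kal}. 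The infinite dimensionality is clear, for instance from the linear independence of the vectors $e_{n}$.

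Second, I would invoke Remark \ref{bsf.exmample.l1}, which already exhibits a besselian Schauder frame of $l^{1}(\mathbb{K})$, namely $\left( \left( e_{n},u_{n}^{\ast }\right) \right) _{n\in \mathbb{N}^{\ast }}$. Thus $l^{1}(\mathbb{K})$ is an infinite dimensional Schur space admitting a besselian Schauder frame, and the hypotheses of the preceding corollary are met verbatim. Applying it yields that the dual space $\left( l^{1}(\mathbb{K})\right) ^{\ast }$ is \emph{not} weakly sequentially complete.

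Finally I would transfer this failure across $\Psi $. Since $\Psi :l^{\infty }(\mathbb{K})\rightarrow \left( l^{1}(\mathbb{K})\right) ^{\ast }$ is an isometric isomorphism, its adjoint $\Psi ^{\ast }:\left( l^{1}(\mathbb{K})\right) ^{\ast \ast }\rightarrow \left( l^{\infty }(\mathbb{K})\right) ^{\ast }$ is again an isomorphism, so every functional $\eta \in \left( l^{\infty }(\mathbb{K})\right) ^{\ast }$ is of the form $\eta =\Lambda \circ \Psi $ for a unique $\Lambda \in \left( l^{1}(\mathbb{K})\right) ^{\ast \ast }$. Consequently a sequence $\left( \mu ^{(k)}\right) _{k}$ in $l^{\infty }(\mathbb{K})$ has $\lim_{k}\eta (\mu ^{(k)})$ existing for every $\eta $ (resp. equal to $\eta (\mu )$ for some fixed $\mu $) if and only if $\left( \Psi (\mu ^{(k)})\right) _{k}$ has $\lim_{k}\Lambda \left( \Psi (\mu ^{(k)})\right) $ existing for every $\Lambda $ (resp. equal to $\Lambda (\Psi (\mu ))$). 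Hence weak sequential completeness is shared or failed simultaneously by $l^{\infty }(\mathbb{K})$ and $\left( l^{1}(\mathbb{K})\right) ^{\ast }$, and the failure established in the previous step forces $l^{\infty }(\mathbb{K})$ not to be weakly sequentially complete, as claimed. The only genuinely nontrivial ingredient is the Schur property of $l^{1}(\mathbb{K})$; once that classical fact is cited, the proof is a direct chaining of the preceding corollary with the identification $\Psi $, and I anticipate no obstacle beyond this bookkeeping.
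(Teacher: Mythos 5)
Your proposal is correct and follows the paper's own argument exactly: cite the Schur property and infinite dimensionality of $l^{1}(\mathbb{K})$, invoke Remark \ref{bsf.exmample.l1} for the besselian Schauder frame, apply the preceding corollary to conclude $\left( l^{1}(\mathbb{K})\right)^{\ast }$ is not weakly sequentially complete, and transport the failure through $\Psi $. The only difference is that you spell out the (routine) transfer of weak sequential completeness across an isometric isomorphism, which the paper leaves implicit.
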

\begin{proof}
It is well-known that the space $l^{1}\left( \mathbb{K}\right) $ is a Schur
space \cite[example 2.5.24 pages 218-220]{meg}, \cite[page 37, theorem 2.3.6 ]{kal},
\cite{sch} which is infinite dimensional and for which the
paire $\left( \left( e_{n},u_{n}^{\ast }\right) \right) _{n\in 
\mathbb{N}
^{\ast }}$ is a besselian Schauder frame as proved in the remark \ref{bsf.exmample.l1}. Hence
the dual space $\left( l^{1}\left( \mathbb{K}\right) \right) ^{\ast }$ is
not weakly sequentially complete. But since $\left( l^{1}\left( \mathbb{K}%
\right) \right) ^{\ast }$ is isometrically isomorphic to the Banach space $%
l^{\infty }\left( \mathbb{K}\right) ,$ it follows that the Banach space $%
l^{\infty }\left( \mathbb{K}\right) $ is not weakly sequentially complete.
\end{proof}

\bigskip

\bigskip

\end{document}